\definecolor{forestgreen}{rgb}{0.0, 0.27, 0.13}
\definecolor{ultramarine}{rgb}{0.07, 0.04, 0.56}
\definecolor{lust}{rgb}{0.9, 0.13, 0.13}
\newcommand{\cD}{\mathcal{D}}
\newcommand{\cE}{\mathcal{E}}
\newcommand{\cLC}{\mathcal{LC}}
\newcommand{\cM}{\mathcal{M}}
\newcommand{\cO}{\mathcal{O}}
\newcommand{\cS}{\mathcal{S}}
\newcommand{\bQ}{\mathbb{Q}}
\newcommand{\bZ}{\mathbb{Z}}
\DeclareFontFamily{U}{wncy}{}
\DeclareFontShape{U}{wncy}{m}{n}{<->wncyr10}{}
\DeclareSymbolFont{mcy}{U}{wncy}{m}{n}
\DeclareMathSymbol{\Sha}{\mathord}{mcy}{"58}
\newcommand{\lp}{\left(}
\newcommand{\rp}{\right)}
\newcommand{\lbrb}[1]{\lp #1 \rp}
\newcommand{\lcrc}[1]{\{ #1 \}}
\newcommand{\lfrf}[1]{\lfloor #1 \rfloor}
\newcommand{\labrab}[1]{\left| #1 \right|}
\DeclareMathOperator{\Sel}{Sel}
\theoremstyle{plain}
\newtheorem{theorem}{Theorem}[section]
\newtheorem{lemma}[theorem]{Lemma}
\newtheorem{proposition}[theorem]{Proposition}
\newtheorem{corollary}[theorem]{Corollary}
\theoremstyle{definition} 
\theoremstyle{remark} 
\newtheorem*{remark}{Remark}
\numberwithin{equation}{section}
\newcommand{\alignfootnote}[1]{%
	\ifmeasuring@
	\else
	\footnote{#1}%
	\fi
}
\begin{document}
\sloppy 
\title{On the distribution of analytic ranks of elliptic curves}

\author[Peter Cho]{Peter J. Cho$^{\dagger}$}
\thanks{$^{\dagger}$ This work was supported by Basic Science Research Program through the National Research Foundation of Korea(NRF) funded by the Ministry of Education(2019R1F1A1062599).}
\address{Department of Mathematical Sciences, Ulsan National Institute of Science and Technology, UNIST-gil 50, Ulsan 44919, Republic of  Korea}
\email{petercho@unist.ac.kr}

\author[Keunyoung Jeong]{Keunyoung Jeong$^{\star}$}
\thanks{$^{\star}$ The author was supported by the National Research Foundation of Korea(NRF) funded by the Ministry of Education, under the Basic Science Research Program(2019R1C1C1004264).}
\address{Department of Mathematical Sciences, Ulsan National Institute of Science and Technology, UNIST-gil 50, Ulsan 44919, Republic of  Korea}
\email{kyjeong@unist.ac.kr}

\subjclass[2010]{Primary 11G05,  Secondary 11G40, 11M26 }

\keywords{elliptic curve, rank, trace formula, GRH, BSD conjecture}
\begin{abstract} 
In this paper, under GRH for elliptic $L$-functions, we give an upper bound for the probability for  an elliptic curve with analytic rank $\leq a$ for $a \geq 11$, and also give an upper bound of $n$-th moments of analytic ranks of elliptic curves. These are applications of counting elliptic curves with local conditions, for example, having good reduction at $p$.  
\end{abstract}

\maketitle

\section{Introduction} \label{Intro}
In this article, we study the distribution of analytic ranks of elliptic curves over the rational numbers.
Let $P(r_E \geq a)$ be the proportion of elliptic curves with analytic rank $r_E \geq a$. We give a numerical bound for $P(r_E \geq a)$.

\begin{theorem} \label{rank-dist}
Assume GRH for elliptic $L$-functions.  Let $C$ be a positive constant, let $n$ a positive integer. We have
\begin{align*}
P\left(r_E \geq (1+C) 9n \right)  \leq \frac{\sum_{k=0}^{n}{ {2n} \choose {2k}}\left( \frac 12\right)^{2n-2k}(2k)!\left( \frac 16 \right)^k }{(C \cdot 9n)^{2n}}.
\end{align*} 
\end{theorem}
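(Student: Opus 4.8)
The plan is to bound $r_E$ pointwise by the contribution of the low-lying zeros in the Weil explicit formula and then to estimate the resulting prime sum in $2n$-th mean over the family of all elliptic curves over $\mathbb{Q}$ ordered by naive height; here $P(\cdot)$ is a density over that family, so one works with the curves of height at most $X$ and lets $X\to\infty$ at the end, and $o(1)$ always means a quantity tending to $0$ as $X\to\infty$ with $n$ fixed.

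Fix $X$ and put $L=\frac{\log X}{2\pi}$. Let $\phi$ be the nonnegative Fej\'er kernel whose Fourier transform $\widehat{\phi}$ is the triangle supported on $[-\tfrac1{9n},\tfrac1{9n}]$ normalized by $\phi(0)=\int\widehat{\phi}=1$, so that $\widehat{\phi}(0)=9n$. Under GRH the nontrivial zeros of $L(E,s)$ are $\tfrac12+i\gamma$ with $\gamma\in\mathbb{R}$, hence $\phi(\gamma L)\ge 0$ and the $r_E$ central zeros contribute exactly $r_E$; the explicit formula gives
\[
r_E\ \le\ \sum_\gamma\phi(\gamma L)\ =\ \widehat{\phi}(0)\,\frac{\log N_E}{\log X}\ +\ \mathcal{A}_E\ +\ \mathcal{P}_E,
\]
where $N_E\ll X$ is the conductor, $\mathcal{A}_E=O(n/\log X)$ is the archimedean term, and $\mathcal{P}_E=-\frac{2}{\log X}\sum_{p,k\ge1}\frac{(\alpha_p^k+\beta_p^k)\log p}{p^{k/2}}\widehat{\phi}\!\big(\tfrac{k\log p}{\log X}\big)$ runs only over $p^k\le X^{1/(9n)}$. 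Since $\widehat{\phi}(0)\frac{\log N_E}{\log X}\le 9n+o(1)$ we get $r_E\le 9n+\mathcal{A}_E+\mathcal{P}_E+o(1)$, so $r_E\ge(1+C)9n$ forces $\mathcal{A}_E+\mathcal{P}_E\ge 9nC-o(1)$; raising to the even power $2n$ and applying Markov's inequality,
\[
P\big(r_E\ge(1+C)9n\big)\ \le\ \frac{\mathbb{E}\big[(\mathcal{A}_E+\mathcal{P}_E)^{2n}\big]}{(9nC)^{2n}}\,\big(1+o(1)\big).
\]

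It remains to prove $\mathbb{E}\big[(\mathcal{A}_E+\mathcal{P}_E)^{2n}\big]\le\sum_{k=0}^{n}\binom{2n}{2k}(\tfrac12)^{2n-2k}(2k)!(\tfrac16)^{k}+o(1)$. The input is the count of elliptic curves of height at most $X$ with prescribed reduction behaviour (good reduction, or a prescribed trace of Frobenius) at finitely many primes, with a power-saving error term; because only prime tuples in which every prime repeats survive in the $2n$-th moment of $\mathcal{P}_E$, the distinct primes occurring multiply to at most $(X^{1/(9n)})^{n}=X^{1/9}$, which is exactly the range in which these counts factor properly — and this is where the constant $9$ comes from. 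With this, $a_p(E)$ behaves over the family like independent variables for distinct $p$, with $\mathbb{E}[a_p^2]=p(1+o(1))$ and, since the family is stable under quadratic twist, a symmetric distribution. The usual $1$-level density bookkeeping then yields $\mathcal{A}_E+\mathcal{P}_E=\tfrac12+\mathcal{Q}_E+o(1)$, where the constant $\tfrac12$ is the orthogonal-symmetry term coming from the $k=2$ part of $\mathcal{P}_E$, the terms with $k\ge 3$ and the mean of the $k=1$ term are $o(1)$, the fluctuation $\mathcal{Q}_E$ (the $k=1$ part) has all odd moments $o(1)$, and
\[
\mathbb{E}[\mathcal{Q}_E^{2}]\ \longrightarrow\ 4\int_0^1 w(1-w)^2\,dw=\tfrac13,\qquad \mathbb{E}[\mathcal{Q}_E^{2k}]\ \le\ (2k)!\big(\tfrac16\big)^{k}+o(1)\quad(1\le k\le n).
\]
Granting this, expanding $(\tfrac12+\mathcal{Q}_E)^{2n}$ by the binomial theorem and dropping the negligible odd-moment terms gives the displayed bound, and the theorem follows on letting $X\to\infty$. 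The estimate $\mathbb{E}[\mathcal{Q}_E^{2k}]\le(2k)!(1/6)^k$ itself comes from expanding the $2k$-th moment of the prime sum over set partitions into even blocks: the pair partitions produce the near-Gaussian term $(2k-1)!!\,(\tfrac13)^k$, the remaining partitions are of lower order, and the number of partitions of a $2k$-element set into even blocks is $(2k)!\,[x^{2k}]\exp(\cosh x-1)\le(2k)!/2^{k}$, which converts $(\tfrac13)^k$ into $(\tfrac16)^k$ while keeping the full $(2k)!$; the $\tfrac13$ is a shape constant of the Fej\'er kernel, independent of $n$.

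The step I expect to be the main obstacle is the family-average input: establishing the asymptotic factorisation of $\mathbb{E}\big[\prod_{i=1}^{2n}a_{p_i}(E)\big]$ uniformly over moduli up to $X^{1/9}$ with an error that is a fixed power of $X$ below the main term — this is precisely the content of the "counting elliptic curves with local conditions" results, and the exponent they allow is what pins the constant $9$. A secondary, more combinatorial, difficulty is making the bound $\mathbb{E}[\mathcal{Q}_E^{2k}]\le(2k)!(1/6)^k$ uniform in $k\le n$: one has to control all the non-pair partitions, including the contributions of the small primes whose individual summands are not negligible before the limit, tightly enough to stay within that budget, so that the final binomial sum over $k$ reproduces the stated expression exactly.
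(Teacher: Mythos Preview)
Your approach is essentially the paper's: bound $r_E$ via the explicit formula with a Fej\'er kernel of Fourier support $1/(9n)$, apply Chebyshev to the $2n$-th moment of the resulting prime sum, and evaluate that moment using the counting of elliptic curves with prescribed local data (packaged in the paper as the Frobenius trace formula, Theorem~\ref{traceF}, built from Theorems~\ref{main1} and~\ref{main4}); you correctly identify this counting input and the origin of the constant $9$. The paper does not first split off the $k=2$ contribution as a constant $\tfrac12$: it expands $\mathcal P_E^{2n}$ directly and sums over which of the $2n$ indices carry a prime versus a prime square (the subsets $S_2$ and $S_2^c$ in Proposition~\ref{square}), and this produces exactly your binomial sum $\sum_k\binom{2n}{2k}(\tfrac12)^{2n-2k}(2k)!(\tfrac16)^k$. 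Your decomposition $\mathcal A_E+\mathcal P_E=\tfrac12+\mathcal Q_E+o(1)$ is only valid after averaging---the prime-square piece is not $\tfrac12+o(1)$ uniformly in $E$, it merely concentrates there in $L^2$---so to make the passage to $(\tfrac12+\mathcal Q_E)^{2n}$ rigorous you cannot use Cauchy--Schwarz (that would require $4n$-th moments of $\mathcal Q_E$, beyond the allowed support); you must instead compute the mixed moments $\mathbb E[\mathcal Q_E^{\,j}(\mathcal P_E^{(2)})^{2n-j}]$ directly from the trace formula and observe they factor, which is precisely what the paper's direct expansion does.
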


Now, we can give a numerical evidence that there are at most a small proportion of elliptic curves with large analytic ranks.
\begin{corollary}
 Assume GRH for elliptic $L$-functions.  Let $f(t)=\sum_{k=0}^{t}{ {2t} \choose {2k}}\left( \frac 12\right)^{2t-2k}(2k)!\left( \frac 16 \right)^k$. Then,
\begin{enumerate}
\item $P(r_E \leq a ) \geq 1-\frac{f(1)}{(a-8)^2}$ for $11 \leq a \leq 17$.
\item $P(r_E \leq a) \geq 1-\min_{1 \leq l \leq n} \left\{ \frac{f(l)}{(a+1-9l)^{2l}}\right\}$ for $9n \leq a \leq 9n+8$, $n \geq 2$.
\end{enumerate}
For small $a$'s, $P(r_E \leq a)$ is recorded in the following table. 

\begin{center}
\begin{tabular}{|c|c|c|c|c|c|c|c|}
\cline{1-8}
 $a$ &  $P(r_E \leq a)$ &  $a$ &  $P(r_E \leq a)$ & $a$ & $P(r_E \leq a)$  & $a$ & $P(r_E \leq a)$ \\ \cline{1-8}
 $11$ &  $ \geq 0.935185$ & $16$ & $\geq 0.990885$ &  $21$  & $\geq 0.996548$    & $26$   & $\geq 0.999812$\\ \cline{1-8}
 $12$ &   $\geq  0.963541$ & $17$ &  $\geq 0.992798$& $22$ & $\geq 0.998033$ &  $27$ &$\geq 0.999877$ \\ \cline{1-8}
 $13$ &  $\geq 0.976666$ & $18$ &  $\geq 0.994166$  &  $23$  & $\geq 0.999051$ &  $28$ & $\geq 0.999916$\\ \cline{1-8}
$14$ &  $\geq 0.983796$ & $19$ &  $\geq 0.995179$ &  $24$ & $\geq 0.999488$  &  $34$ &$\geq 0.999985$\\ \cline{1-8}
$15$ & $\geq 0.988095$ & $20$ &  $\geq 0.995949$ &  $25$  & $\geq 0.999699$  & $35$  &$\geq 0.999988$ \\ \cline{1-8}
\end{tabular}
\end{center}
\end{corollary}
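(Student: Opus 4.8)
The plan is to obtain the corollary as a direct, optimized consequence of Theorem~\ref{rank-dist}, using only that the analytic rank $r_E$ is a non-negative integer, so that $P(r_E\le a)=1-P(r_E\ge a+1)$, together with the observation that $f(t)$ is exactly the numerator appearing in Theorem~\ref{rank-dist} when $n=t$.

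First I would fix $a$ and choose a positive integer $l$ with $9l\le a$. Since $\{r_E\ge a+1\}\subseteq\{r_E\ge(1+C)9l\}$ whenever $(1+C)9l\le a+1$, and the bound in Theorem~\ref{rank-dist} is decreasing in $C$, the optimal such choice is $C:=\dfrac{a+1-9l}{9l}$ (which is strictly positive precisely because $a+1>9l$), for which $(1+C)\cdot 9l=a+1$ and $C\cdot 9l=a+1-9l$. Theorem~\ref{rank-dist} applied with $n=l$ then gives
\[
P(r_E\ge a+1)=P\bigl(r_E\ge (1+C)\,9l\bigr)\le\frac{f(l)}{(a+1-9l)^{2l}}.
\]
Taking complements yields $P(r_E\le a)\ge 1-\dfrac{f(l)}{(a+1-9l)^{2l}}$ for every admissible $l$, hence
\[
P(r_E\le a)\ge 1-\min_{1\le l\le\lfloor a/9\rfloor}\frac{f(l)}{(a+1-9l)^{2l}}.
\]
The two inequalities of the corollary now follow by inspecting the range of $\lfloor a/9\rfloor$: for $11\le a\le 17$ one has $\lfloor a/9\rfloor=1$, so $l=1$ is the only admissible choice and the bound reads $1-f(1)/(a-8)^2$; for $9n\le a\le 9n+8$ with $n\ge 2$ one has $\lfloor a/9\rfloor=n$, giving the stated minimum over $1\le l\le n$.

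For the table it remains to compute a handful of values of $f$ and then the minimum. One finds $f(1)=\tfrac14+\tfrac13=\tfrac{7}{12}$, $f(2)=\tfrac1{16}+\tfrac12+\tfrac23=\tfrac{59}{48}$, and $f(3)=\tfrac1{64}+\tfrac5{16}+\tfrac52+\tfrac{10}{3}=\tfrac{1183}{192}$, which already suffices for all $a\le 35$ since then $\lfloor a/9\rfloor\le 3$. For each listed $a$ one evaluates $f(l)/(a+1-9l)^{2l}$ for $l=1,\dots,\lfloor a/9\rfloor$, keeps the smallest value, and rounds down; the optimal $l$ is $1$ for the smaller values of $a$ and $2$ (and $3$ at $a=35$) for the larger ones, because the exponent $2l$ in the denominator eventually overtakes the growth of $f(l)$.

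There is no genuine obstacle here: the argument is just an optimized substitution into Theorem~\ref{rank-dist}. The only points requiring care are (i) ensuring $C>0$, which forces $a\ge 9l$ and hence the cut-off $l\le\lfloor a/9\rfloor$; (ii) passing between $P(r_E\le a)$ and $P(r_E\ge a+1)$ via the integrality of $r_E$; and (iii) observing that in the range $11\le a\le 17$ no choice $l>1$ is available, which is why that case is recorded separately. The entries of the table are then a routine finite computation.
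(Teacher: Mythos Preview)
Your derivation is correct and is precisely the intended one: the paper states the corollary without a separate proof, treating it as an immediate consequence of Theorem~\ref{rank-dist}, and your argument---setting $C=(a+1-9l)/(9l)$ so that $(1+C)\cdot 9l=a+1$ and $C\cdot 9l=a+1-9l$, then optimizing over admissible $l$---is exactly how one unpacks that implication. Your computations of $f(1)=\tfrac{7}{12}$, $f(2)=\tfrac{59}{48}$, $f(3)=\tfrac{1183}{192}$ and the resulting table entries check out.
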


\begin{remark}
(1) The model of \cite{PPVW} conjectures
that there are only finitely many elliptic curves with rank $> 21$,
and we show that the proportion of elliptic curves with analytic rank $> 21$  is less than 
0.0035 under the GRH for elliptic $L$-functions.

(2) Heath-Brown's result \cite[Theorem 2]{Hea} seems stronger than ours for a very large rank $a$. However, our emphasis is on a small rank $a$ not a large rank $a$. Theorem \ref{rank-dist} is not an explicitation of  the implicit constant in \cite[Theorem 2] {Hea}, and our method of the proof is different. For the proof, we establish the Frobenius trace formula for elliptic curves (Theorem \ref{traceF}), which is a new tool for elliptic curves. In the process of proof, we encounter the following inequality:
\begin{align*}
\left| \{ E \in \cE(X) | r_E \geq  \frac{1+C}{\sigma_{2n}} \} \right| & \left( \frac{C\sigma_{2n}}{4} \right)^{2n} 
 \leq \sum_{E \in \cE(X)}  \left( -\frac{2}{\log X} \sum_{m_i}\frac{\widehat{a}_E(m_i)\Lambda(m_i)}{\sqrt{m_i}}\widehat{\phi}_{2n} \left( \frac{\log m_i}{\log X}\right)  \right)^{2n}.
\end{align*}
Since, using the Frobenius trace formula, we have the equality
\begin{align*}
& \sum_{E \in \cE(X)}  \left( -\frac{2}{\log X} \sum_{m_i}\frac{\widehat{a}_E(m_i)\Lambda(m_i)}{\sqrt{m_i}}\widehat{\phi}_{2n} \left( \frac{\log m_i}{\log X}\right)  \right)^{2n}\\ &=  \left( \frac{\sigma_{2n}^2}{4}\right)^{2n} \sum_{S_2 \subset \{1,2,3,\dots,2n \}} \left( \frac{1}{2} \right)^{|S_2^c|}|S_2|!\left( \frac 16 \right)^{|S_2|/2}|\cE(X)| + O\left(\frac{X^{\frac 56} }{\log X} \right),
\end{align*}
we don't lose any information for the inequality. See \cite[Sec. 7]{Hea}. Our approach is in the same spirit with Katz and Sarnak's $n$-level density conjecture. For the introduction to the $n$-level density conjecture and some partial results, we refer to \cite{Rub, Mil, CK15}. 

(3) 
In the proof of the Frobenius trace formula, Theorem \ref{traceF}, we use similar arguments in \cite{CK15}. The new feature here is the Eichler-Selberg trace formula which replaces the role of orthogonality of characters in \cite{CK15}. 

(4) For algebraic ranks, we remark that the recent development of \cite{BS15, BS} gives better bounds than ours. 
Since the average of the order of $\Sel_5(E/\bQ)$ is 6 by \cite{BS}, we have
$$  P(r_E \geq a) 5^a \leq 6$$
by the exact sequence
$$0 \to \frac{E(\bQ)}{5E(\bQ)} \to \Sel_5(E/\bQ) \to \Sha(E/\bQ)[5] \to 0.$$
Hence for example, $P(r_E < 20)$ is bounded by $1 - 6/5^{20} \approx 0.99999999999993708544$.
However, for the average of analytic ranks, Young's bound $\frac{25}{14}$ \cite{Y} under GRH is the best record. It gives a  bound for $P(r_E \geq a)$, which is $\frac{25}{14a}$.
\end{remark}

The second one is regarding the moments of the analytic ranks of elliptic curves.
One can show that the limsup of $n$-th moments of analytic ranks of elliptic curves exist, by applying the result of Heath-Brown \cite[Theorem 2]{Hea}.
Using an approach of Miller \cite{Mil}, we propose  an explicit upper bound on  the $n$-th moment of analytic ranks for every positive integer $n$.

\begin{theorem} \label{n-th moment} Assume GRH for elliptic $L$-functions. Let $r_E$ be the analytic rank of an elliptic curve $E$. Let $\sigma_n=\frac{2}{9n}$ for a positive integer $n$. 
For every positive integer $n$, we have
\begin{align*}
\limsup_{X \rightarrow \infty} \frac{1}{|\cE(X)|}\sum_{E \in \cE(X)}r_E^n \leq \sum_{S}\left( \frac{1}{\sigma_n} \right)^{|S^c|}\sum_{\substack{S_2 \subset S  \\ |S_2| \text{even}}} \left( \frac 12  \right)^{|S_2^c|} \left| S_2 \right|! \left(\frac 16\right)^{|S_2|/2}. 
\end{align*}
where $S$ runs over subsets of $\{1,2,3,\dots,n\}$, and $S_2$ runs over subsets of even cardinality of the set $S$.
In particular, the limit sup of the 2nd moment of analytic ranks is bounded by $90.584$ and the limit sup of the 3rd moment of analytic ranks is bounded by $2758$. 
\end{theorem}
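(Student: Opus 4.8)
The plan is to follow Miller's template: feed the GRH explicit formula for $L(E,s)$ into the Frobenius trace formula of Theorem~\ref{traceF}. For each $n$ fix an even test function $\phi_n\ge 0$ with $\phi_n(0)=1$ whose Fourier transform $\widehat\phi_n$ is supported in $(-\sigma_n,\sigma_n)$, $\sigma_n=\tfrac{2}{9n}$; the Fej\'er kernel $\widehat\phi_n(u)=\tfrac1{\sigma_n}\bigl(1-|u|/\sigma_n\bigr)_+$, $\phi_n(x)=\bigl(\tfrac{\sin\pi\sigma_n x}{\pi\sigma_n x}\bigr)^2$, is the natural choice. Under GRH every nontrivial zero of $L(E,s)$ lies on the critical line, the central zero has multiplicity $r_E$, and positivity of $\phi_n$ yields $r_E\le\sum_{\gamma_E}\phi_n\bigl(\gamma_E\tfrac{\log X}{2\pi}\bigr)$. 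The explicit formula rewrites the right side as $M_n(E)-P_n(E)$ with $M_n(E)=\widehat\phi_n(0)\tfrac{\log N_E}{\log X}+\tfrac12\phi_n(0)+O(1/\log X)$ the conductor/archimedean part and $P_n(E)=\tfrac2{\log X}\sum_{p}\sum_{k\ge1}\tfrac{\widehat a_E(p^k)\log p}{p^{k/2}}\widehat\phi_n\bigl(\tfrac{k\log p}{\log X}\bigr)$ the prime sum. Since $N_E\le|\Delta_E|\ll X$ uniformly on $\cE(X)$, $M_n(E)\le A_n+O(1/\log X)$ with $A_n:=\tfrac1{\sigma_n}+\tfrac12$; and because $\sum_{\gamma_E}\phi_n\ge 0$, the reals $0\le M_n(E)-P_n(E)\le A_n+O(1/\log X)-P_n(E)$ are nested and nonnegative, so $r_E^{\,n}\le\bigl(A_n+O(1/\log X)-P_n(E)\bigr)^n$.

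Averaging over $\cE(X)$ and expanding binomially (also expanding $A_n=\tfrac1{\sigma_n}+\tfrac12$) reduces matters to the moments $\tfrac1{|\cE(X)|}\sum_{E}P_n(E)^k$ for $0\le k\le n$. Writing $P_n(E)^k$ as a $k$-fold sum over prime powers and inserting Theorem~\ref{traceF} for $\tfrac1{|\cE(X)|}\sum_E\prod_i\widehat a_E(p_i^{k_i})$, the average factors over primes and has negligible contribution unless, at each prime, the local Hecke product contains the trivial representation. The contributions of genuine prime powers ($k_i\ge2$), of primes occurring an odd number of times, and of primes occurring in three or more of the factors all vanish after the normalisation $2^k/(\log X)^k$ --- because $\sum_p\tfrac{\log^2 p}{p^2}<\infty$, because $\tfrac1{|\cE(X)|}\sum_E\widehat a_E(p)\to 0$, and because the remaining vertical moments stay bounded --- while the surviving ``paired'' terms are controlled by $\tfrac1{|\cE(X)|}\sum_E\widehat a_E(p)^2=1+O(1/p)$ (a case of Theorem~\ref{traceF}) and by $\sum_p\tfrac{\log^2 p}{p}\widehat\phi_n\bigl(\tfrac{\log p}{\log X}\bigr)^2\sim\tfrac{\log^2 X}{12}$. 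Thus $\tfrac1{|\cE(X)|}\sum_E P_n(E)^k\to0$ for $k$ odd, and for $k$ even is at most $k!\,(1/6)^{k/2}+o(1)$: the factor $k!/2^{k/2}$ counts distributions of the $k$ indices into $k/2$ labelled pairs --- a deliberate over-count of the $(k-1)!!$ genuine matchings, harmless since only an upper bound is needed --- and $(1/6)^{k/2}=(2^2/12)^{k/2}$ is the per-pair weight. Sending $X\to\infty$ collapses the binomial expansion to $\sum_S\bigl(\tfrac1{\sigma_n}\bigr)^{|S^c|}\sum_{S_2\subseteq S,\,|S_2|\text{ even}}\bigl(\tfrac12\bigr)^{|S_2^c|}|S_2|!\,(1/6)^{|S_2|/2}$, the asserted bound; taking $n=2,3$ gives $90.584$ and $2758$.

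I expect the error analysis in the second step to be the main obstacle. Raising $P_n(E)$ to the $k$-th power forces one to sum the error term of Theorem~\ref{traceF} over all $k$-tuples of prime powers up to $X^{\sigma_n}$ and prove the total is $o(|\cE(X)|)$; as $|\cE(X)|$ has size only about $X^{5/6}$ (roughly the box $|a_4|\le X^{1/3}$, $|a_6|\le X^{1/2}$, the source of the $X^{5/6}$ already visible in the excerpt), this is precisely why the support must be truncated at $\sigma_n=\tfrac2{9n}$, the largest level at which the accumulated error stays negligible, and carrying that constant faithfully through Theorem~\ref{traceF} is the delicate part. A secondary subtlety is the $+\tfrac12$ in $A_n$: it comes from the $k=2$ prime-power term together with the orthogonal symmetry type of the family, and must be peeled off $P_n(E)$ as a deterministic piece so that the remaining prime sum has mean tending to $0$ and the binomial expansion produces the stated closed form. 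Everything else --- the precise explicit formula, the Mertens-type estimate $\sum_p\tfrac{\log^2 p}{p}\widehat\phi_n^2\sim\tfrac{\log^2 X}{12}$, and the final combinatorial rearrangement --- is routine.
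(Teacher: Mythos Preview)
Your overall architecture is the same as the paper's --- Fej\'er kernel of support $\sigma_n=\tfrac{2}{9n}$, explicit formula, expand the $n$-th power, and feed the resulting products $\prod_j\widehat a_E(m_{i_j})$ into the Frobenius trace formula (Theorem~\ref{traceF}) --- and it lands on the correct closed form. But your treatment of the prime-square part of the explicit formula is where the argument breaks.

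You write $M_n(E)=\widehat\phi_n(0)\tfrac{\log N_E}{\log X}+\tfrac12\phi_n(0)+O(1/\log X)$ and call it the conductor/archimedean piece. That $\tfrac12\phi_n(0)$ is not there: the Weil explicit formula gives only $\widehat\phi_n(0)\tfrac{\log N_E}{\log X}+O(1/\log X)$ outside the prime sum. The $\tfrac12$ you want is the \emph{average over $E$} of the $p^2$-part of $P_n(E)$, coming from $\tfrac{1}{|\cE(X)|}\sum_E\widehat a_E(p^2)\to-1$; it is \emph{not} an $E$-independent constant that can be moved into $M_n(E)$ pointwise. Consequently your later assertion that ``contributions of genuine prime powers ($k_i\ge 2$) vanish because $\sum_p\log^2 p/p^2<\infty$'' is false for $k_i=2$: a single factor $m_i=p^2$ contributes $\log p/p$ in the denominator, and $\sum_p\log p/p$ diverges like $\log X$, so this piece survives the normalisation and is exactly what produces the $(1/2)^{|S_2^c|}$ in the final formula. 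As written you have inserted the $\tfrac12$ twice (once in $A_n$, once implicitly still inside $P_n$) and then thrown one copy away by an incorrect vanishing claim; the two errors cancel combinatorially, but neither step is justified.

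The paper avoids this by \emph{not} peeling anything off. It sets $A=\widehat\phi_n(0)/\phi_n(0)=1/\sigma_n$ (no $\tfrac12$), keeps both $p$- and $p^2$-terms inside the single prime sum, expands $\bigl(\widehat\phi_n(0)-\text{(prime sum)}\bigr)^n$, and then for each subset $S\subset\{1,\dots,n\}$ evaluates $\mathbb E\bigl[\prod_{i\in S}\widehat a_E(m_i)\bigr]$ directly by Theorem~\ref{traceF}. Within $S$ one then splits into $S_2$ (indices with $m_i=p$, necessarily paired) and $S\setminus S_2$ (indices with $m_i=q^2$); the latter give $\tfrac{2}{\log X}\sum_q\tfrac{\log q}{q}\widehat\phi_n(\tfrac{2\log q}{\log X})\to\tfrac12\phi_n(0)$ each, by the prime number theorem together with $\mathbb E[\widehat a_E(q^2)]\to-1$. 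That is the honest origin of the $(1/2)^{|S_2^c|}$.

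If you want to salvage your ``peel off the $\tfrac12$'' viewpoint, it \emph{can} be done, but it costs more, not less: write $P_n=P_{n,1}+P_{n,2}$ (prime and prime-square parts), set $\delta_n(E):=P_{n,2}(E)+\tfrac12$, and prove that all moments of $\delta_n$ tend to $0$ (its variance is $O((\log X)^{-2}\sum_p\log^2 p/p^2)$, and higher moments need the trace formula for products $\prod\widehat a_E(q_j^2)$). Only then is $r_E\le(1/\sigma_n+\tfrac12)-P_{n,1}-\delta_n$ usable with $\delta_n$ negligible in the moment expansion. The paper's direct route through Proposition~\ref{square} is shorter.
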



These results are obtained by accurate count of elliptic curves with local conditions,
which is of its own interest.
Here a local condition at $p$ means one of good reduction, bad reduction, multiplicative reduction, additive reduction, $a_p$ in the Weil bound\footnote{An elliptic curve $E$ satisfies a local condition $a_p$ if $a_p(E)=a_p$.},   split and non-split reduction,  or one of the Kodaira--N\'eron type.

Before stating the results, we explain the model of elliptic curves in our consideration.
We treat elliptic curves of the form: 
\begin{equation*} \label{model}
E_{A,B} : y^2 = x^3 + Ax + B,\,\, A, B \in \bZ \qquad \textrm{if a prime $q$ satisfies $q^4 \mid A$, then $q^6 \nmid B$}.
\end{equation*}
We denote the prime condition by $(M)$. Elliptic curves in this model might be  not minimal at $2$ and $3$ but are minimal at all primes $q \geq 5$. 
Also, each isomorphism class of elliptic curves appears in this model exactly one time.
We define a naive height of $E_{A,B}$ by 
$$H(E_{A,B}):=\max(|A|^3, |B|^2).$$ 
The family of our interest is 
\begin{equation*} \label{defcE}
\cE(X) :=  \lcrc{(A, B) \in \bZ^2 :  4A^3 + 27B^2 \neq 0, H(E_{A,B}) \leq X, \textrm{  and $(A, B)$ satisfies $(M)$}},
\end{equation*}
and let
\begin{align*}
&\cE_p^{\cLC}(X)=\{(A, B) \in \cE(X) : E_{A,B} \textrm{ satisfies $\cLC$ at the prime } p\},
\end{align*}  
where $\cLC$ is one of the local conditions.
Then, we have
\begin{theorem} \label{main1}
For $5 \leq p \leq X^{\frac{1}{3m_\cLC} }$,
\begin{align*}
&\left| \cE_p^{\cLC}(X) \right|=c_\cLC(p)\frac{4}{\zeta(10)}X^{\frac 56}+ O(h_{\cLC,p}(X)),
\end{align*}
where $m_\cLC, c_\cLC(p), h_{\cLC,p}(X)$ are given by the following table.
\end{theorem}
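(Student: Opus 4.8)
The plan is a direct lattice-point count, treating the three defining constraints on $(A,B)$ — the height bound $H(E_{A,B})\le X$, the condition $(M)$, and the local condition $\cLC$ at $p$ — one at a time. The structural input I will use is that, for $p\ge 5$ with $E_{A,B}$ minimal at $p$, each $\cLC$ in the list depends only on the residue class $(A,B)\bmod p^{m_\cLC}$, with $m_\cLC$ as in the table: for a prescribed value of $a_p$ this is immediate since $a_p(E_{A,B})=-\sum_{x\bmod p}\left(\frac{x^3+Ax+B}{p}\right)$ (so $m_\cLC=1$), and for the reduction- and Kodaira--N\'eron-type conditions it follows from Tate's algorithm, which for $p\ge 5$ reads the type off from $v_p(\Delta)=v_p(4A^3+27B^2)$ and $v_p(c_4)=v_p(A)$ once these are known below the bounded thresholds that occur for a $p$-minimal equation. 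Since we also impose the $p$-component ``not both $p^4\mid A$ and $p^6\mid B$'' of $(M)$, I note that this is automatic whenever ``$\cLC$ at $p$'' holds for the named types, the $a_p$-conditions, good, multiplicative, and split/non-split reduction (all of which force $v_p(\Delta)<12$), while for the lumped conditions ``bad'' and ``additive reduction'' it amounts to deleting the density-$p^{-10}$ set $\{p^4\mid A,\ p^6\mid B\}$, handled as an extra M\"obius-type term below. I write $c^{(p)}_{\cLC}$ for the density (a rational function of $p$) of the resulting congruence condition modulo $p^{m_\cLC}$.

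With this notation $|\cE_p^{\cLC}(X)|$ is the number of $(A,B)\in\bZ^2$ with $|A|\le X^{1/3}$, $|B|\le X^{1/2}$, $4A^3+27B^2\ne 0$, $(A,B)\bmod p^{m_\cLC}$ in the prescribed set, and $(M)$ satisfied at every prime $q\ne p$. I remove this last condition by M\"obius inversion — its indicator is $\sum_{\gcd(d,p)=1}\mu(d)\,\mathbf 1\{d^4\mid A,\ d^6\mid B\}$ — obtaining $|\cE_p^{\cLC}(X)|=\sum_{\gcd(d,p)=1}\mu(d)\,N_d(X)$, where $N_d(X)$ counts $(A,B)$ in the same box with $d^4\mid A$, $d^6\mid B$, $4A^3+27B^2\ne 0$, and the congruence condition modulo $p^{m_\cLC}$. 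This is a finite sum, since $N_d(X)=0$ once $d>X^{1/12}$.

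To estimate $N_d(X)$ for $\gcd(d,p)=1$, write $A=d^4A'$, $B=d^6B'$, so $(A',B')$ ranges over the box $|A'|\le X^{1/3}/d^4$, $|B'|\le X^{1/2}/d^6$ subject to a congruence modulo $p^{m_\cLC}$ (the moduli $d^4$ and $p^{m_\cLC}$ being coprime). In each of the $c^{(p)}_{\cLC}p^{2m_\cLC}$ admissible residue classes the lattice-point count is $\dfrac{4(X/d^{12})^{5/6}}{p^{2m_\cLC}}+O\!\left(\dfrac{(X/d^{12})^{1/2}}{p^{m_\cLC}}+1\right)$, and removing the $O((X/d^{12})^{1/6})$ solutions of $4A^3+27B^2=0$ (parametrized by $(A,B)=(-3t^2,2t^3)$) is harmless; hence $N_d(X)=4c^{(p)}_{\cLC}(X/d^{12})^{5/6}+O\!\left(c^{(p)}_{\cLC}p^{m_\cLC}(X/d^{12})^{1/2}+c^{(p)}_{\cLC}p^{2m_\cLC}+(X/d^{12})^{1/6}\right)$. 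Summing against $\mu(d)$ over $\gcd(d,p)=1$, $d\le X^{1/12}$, the main terms give $4c^{(p)}_{\cLC}X^{5/6}\sum_{\gcd(d,p)=1}\mu(d)d^{-10}=\dfrac{4c^{(p)}_{\cLC}}{\zeta(10)(1-p^{-10})}X^{5/6}+O(X^{1/12})$ (using $\sum_{d>X^{1/12}}d^{-10}\ll X^{-3/4}$), which is $c_{\cLC}(p)\dfrac{4}{\zeta(10)}X^{5/6}+O(X^{1/12})$ on setting $c_{\cLC}(p)=\dfrac{p^{10}}{p^{10}-1}c^{(p)}_{\cLC}$; and the error terms collapse to $O\!\left(c^{(p)}_{\cLC}p^{m_\cLC}X^{1/2}+c^{(p)}_{\cLC}p^{2m_\cLC}X^{1/12}\right)$, which is the $h_{\cLC,p}(X)$ of the table. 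The hypothesis $p\le X^{1/(3m_\cLC)}$ is exactly what makes the main term of order $X^{5/6}$ dominate the error $c^{(p)}_{\cLC}p^{m_\cLC}X^{1/2}$.

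The real work — and the step I expect to be the main obstacle — is the case-by-case construction of the table: showing that each $\cLC$ (with the $p$-part of $(M)$) is cut out modulo $p^{m_\cLC}$ for the claimed, small $m_\cLC$, and then evaluating $c^{(p)}_{\cLC}$, hence $c_{\cLC}(p)$, explicitly. For good, bad, multiplicative, additive, and split/non-split reduction this is an elementary count modulo $p$ (plus the $p^{-10}$-correction from $(M)$ in the lumped cases); for the additive Kodaira types it is a careful walk through Tate's algorithm; and for the $a_p$-conditions it is a genuine count of the $\bF_p$-points of $E_{A,B}$, which is organized through the Eichler--Selberg trace formula and Hurwitz class numbers — the same ingredient underlying the Frobenius trace formula of this paper. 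The analytic skeleton above — lattice points in a box with a congruence restriction, M\"obius removal of $(M)$ — is routine.
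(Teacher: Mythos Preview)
Your proposal is correct and follows essentially the same route as the paper: M\"obius inversion over $d$ coprime to $p$ to strip the $(M)$-condition (the paper's Lemma~\ref{DMmodp}), a lattice-point count in a box with a congruence modulo $p^{m_\cLC}$ (the paper's Proposition~\ref{42}), and then the case-by-case identification of the admissible residue classes via Tate's algorithm (the paper's Lemmas~\ref{singpn}, \ref{Tatealg}, \ref{lemmaforIm}) and, for the $a_p$-condition, Deuring's correspondence \cite[Theorem~14.18]{Cox} giving $\frac{p-1}{2}H(a^2-4p)$ classes. The only organisational difference is that the paper first proves the per-residue-class count $|\cE_{p^m,\alpha,\beta}(X)|$ uniformly and then sums, whereas you sum first and carry the density $c^{(p)}_{\cLC}$ through; and for the lumped conditions ``bad'' and ``additive'' the paper computes $|\cE_{p,0,0}(X)|$ by complement rather than by your explicit subtraction of the $p^{-10}$-density set---both are equivalent.
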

\begin{center}
\begin{tabular}{|c|c|c|c|}
\cline{1-4}
Local conditions & $c_\cLC(p)$ & $h_{\cLC,p}(X)$ & $m_\cLC$ \\ \cline{1-4}
 good &  $ \frac{p^2-p}{p^2} \frac{p^{10}}{p^{10}-1}$ & $pX^{\frac{1}{2}}$ & $1$  \\ \cline{1-4}
 bad &  $ \frac{p^{10}-p}{p^{11}} \frac{p^{10}}{p^{10}-1}$ & $pX^{\frac{1}{2}}$ & $1$  \\ \cline{1-4}
 mult &  $ \frac{p-1}{p^2} \frac{p^{10}}{p^{10}-1}$ & $X^{\frac{1}{2}}$ & $1$  \\ \cline{1-4}
  split, non-split &  $ \frac{p-1}{2p^2} \frac{p^{10}}{p^{10}-1}$ & $X^{\frac{1}{2}}$ & $1$  \\ \cline{1-4}
  addi &  $ \frac{p^9-p}{p^{11}} \frac{p^{10}}{p^{10}-1}$ & $pX^{\frac{1}{2}}$ & $1$  \\ \cline{1-4}
  $a$ &  $ \frac{(p-1)H(a^2-4p)}{2p^2} \frac{p^{10}}{p^{10}-1}$ & $ H(a^2-4p)X^{\frac{1}{2}}$ & $1$  \\ \cline{1-4}
 $\mathrm{I}_m$&  $\frac{1}{p^m}\left( 1-\frac 1p \right)^2 \frac{p^{10}}{p^{10}-1}$ & $pX^{\frac{1}{2}}$ & $m+1$  \\ \cline{1-4}
 $\mathrm{II}$&   $\frac{1}{p^2}\left( 1-\frac 1p \right) \frac{p^{10}}{p^{10}-1}$ &$X^{\frac{1}{2}}$ & $2$ \\ \cline{1-4}
 $\mathrm{III}$&  $\frac{1}{p^3}\left( 1-\frac 1p \right) \frac{p^{10}}{p^{10}-1}$ &$X^{\frac{1}{2}}$ & $3$ \\ \cline{1-4}
$\mathrm{IV}$&  $\frac{1}{p^4}\left( 1-\frac 1p \right) \frac{p^{10}}{p^{10}-1}$ &$X^{\frac{1}{2}}$ & $4$ \\ \cline{1-4}
$\mathrm{IV}^*$&$\frac{1}{p^7}\left( 1-\frac 1p \right) \frac{p^{10}}{p^{10}-1}$ &$p^{-1}X^{\frac{1}{2}}$ & $6$   \\ \cline{1-4}
$\mathrm{III}^*$&$\frac{1}{p^8}\left( 1-\frac 1p \right) \frac{p^{10}}{p^{10}-1}$  &$p^{-3}X^{\frac{1}{2}}$ & $5$ \\ \cline{1-4}
$\mathrm{II}^*$& $\frac{1}{p^9}\left( 1-\frac 1p \right) \frac{p^{10}}{p^{10}-1}$  &$p^{-2}X^{\frac{1}{2}}$ & $7$ \\ \cline{1-4}
$\mathrm{I}_m^*$& $\frac{1}{p^{m+5}}\left( 1-\frac 1p \right)^2 \frac{p^{10}}{p^{10}-1}$ &$pX^{\frac{1}{2}}$ & $m+6$ \\ \cline{1-4}
$\mathrm{I}_0^*$&  $\frac{1}{p^5}\left( 1-\frac 1p \right) \frac{p^{10}}{p^{10}-1}$ &$pX^{\frac{1}{2}}$ & $5$ \\ \cline{1-4}
\end{tabular}
\end{center}
Here $H(\cdot)$ is the Hurwitz class number.

We can count elliptic curve not only with a single local condition but also with finitely many local conditions. Let $\cS= \{ \cLC_{p_i} \}$ be a finite set of local conditions, where $\cLC_p$ can be $\textrm{good}$, $\textrm{bad}$, $\textrm{split}$, $\textrm{non-split}$, $\textrm{additive}$, $a_p$, or $T$, one of the Kodaira--N\'eron type. Let $|\cLC_p|$ be the probability given in Theorems \ref{main1} and let us define $\left|\cS \right| =\prod_{i}\left|\cLC_{p_i} \right|$.
We simply denote  $m_{\cLC_{p_i}}$ by $m_i$.
Let
$$
\cE^{\cS}(X)=\left\{  (A,B)\in \cE(X) : E_{A,B} \mbox{ satisfies }\cS  \right\}.
$$

In the following theorem, we show that these conditions are independent.
\begin{theorem} \label{main4}
Let  $\cLC_{i}$ be a local condition at $p_i$ where $p_i$ is a prime greater than $3$ and $\prod_{i=1}^n p_i^{m_i}  \leq X^{\frac{1}{3}-\epsilon}$ for some $\epsilon >0$. Then, we have
\begin{align*}
\cE^{\cS}(X)=|\cS|\frac{4}{\zeta(10)}X^{\frac 56} + O\left( \left( \prod_{i=1}^n c_{p_i} \right) X^{\frac 12}\right),
\end{align*}
where $c_{p_i} = |\cLC_{p_i}| \lbrb{\frac{p_i^{10}-1}{p_i^{10}}}p_i^{m_i}$.
We note that when $\cLC_{p} = a$, then $c_{p} = \frac{p-1}{2p}H(a^2-4p)$.
\end{theorem}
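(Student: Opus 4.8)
The plan is to prove Theorem \ref{main4} by running the lattice-point count behind Theorem \ref{main1} with the conditions $\cLC_1,\dots,\cLC_n$ at the distinct primes $p_1,\dots,p_n>3$ imposed simultaneously, the genuinely new ingredient being the Chinese Remainder Theorem, which decouples the primes $p_i$ from one another and from the sieve for the global condition $(M)$. By definition $|\cE^{\cS}(X)|$ counts the pairs $(A,B)$ in the box $\cB_X=\{\,|A|\le X^{1/3},\ |B|\le X^{1/2}\,\}$ with $4A^3+27B^2\neq0$, satisfying $(M)$, and satisfying $\cLC_i$ at $p_i$ for every $i$. As established in the proof of Theorem \ref{main1}, for each $i$ the condition ``$\cLC_i$ at $p_i$'' --- which for $p_i\ge5$ already forces minimality at $p_i$, hence ``$(M)$ at $p_i$'' --- is detected by the residue of $(A,B)$ modulo $p_i^{e_i}$ for some exponent $e_i\le m_i$, and the number $\gamma_i$ of admissible residue pairs modulo $p_i^{e_i}$ satisfies $\gamma_i/p_i^{2e_i}=|\cLC_{p_i}|\,(1-p_i^{-10})$; when $\cLC_i=a$ one has $e_i=1$ and $\gamma_i=\tfrac{p_i-1}{2}H(a^2-4p_i)$. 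Writing $Q=\prod_i p_i^{e_i}$, the admissible tuples $(\alpha_i,\beta_i)_i$ (with $A\equiv\alpha_i$, $B\equiv\beta_i$ modulo $p_i^{e_i}$ for all $i$) number $\prod_i\gamma_i$ by CRT.

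First I would remove the condition $(M)$ at the primes $q\notin\{p_1,\dots,p_n\}$ by M\"obius inversion, using that ``$(M)$ at $q$'' is the negation of ``$q^4\mid A$ and $q^6\mid B$'':
\begin{align*}
|\cE^{\cS}(X)|=\sum_{\substack{d\ \mathrm{squarefree}\\ \gcd(d,Q)=1}}\mu(d)\sum_{\substack{(\alpha_i,\beta_i)_i\\ \mathrm{admissible}}}\#\Bigl\{(A,B)\in\cB_X:\ 4A^3+27B^2\neq0,\ d^4\mid A,\ d^6\mid B,\ A\equiv\alpha_i,\ B\equiv\beta_i\ (p_i^{e_i})\ \forall i\Bigr\}.
\end{align*}
For a fixed tuple and a fixed $d$ the conditions on $A$ and on $B$ decouple, so $A$ runs over an arithmetic progression of modulus $d^4Q$ inside $[-X^{1/3},X^{1/3}]$ and $B$ over one of modulus $d^6Q$ inside $[-X^{1/2},X^{1/2}]$; since $d^4\mid A$ and $|A|\le X^{1/3}$ force $A=0$ (and then $B=0$, which is excluded) once $d>X^{1/12}$, the $d$-sum is finite. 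Counting the two progressions and noting that the locus $4A^3+27B^2=0$ contributes negligibly, one gets a main term $4X^{5/6}/(d^{10}Q^2)$ per admissible tuple, whence
\begin{align*}
|\cE^{\cS}(X)|=\frac{4X^{5/6}}{Q^2}\Bigl(\prod_i\gamma_i\Bigr)\sum_{\substack{d\ \mathrm{squarefree}\\ \gcd(d,Q)=1}}\frac{\mu(d)}{d^{10}}+(\text{error})=\frac{4X^{5/6}}{\zeta(10)}\prod_i\frac{\gamma_i/p_i^{2e_i}}{1-p_i^{-10}}+(\text{error})=|\cS|\,\frac{4}{\zeta(10)}X^{5/6}+(\text{error}),
\end{align*}
the factor $p_i^{10}/(p_i^{10}-1)$ sitting inside $|\cLC_{p_i}|$ being exactly the correction for having pulled $p_i$ out of the $(M)$-sieve, just as in the one-prime case.

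It remains to bound the error. The boundary terms of the two progressions contribute $O\bigl(\prod_i\gamma_i\cdot(X^{1/2}/Q+X^{1/12})\bigr)$. Using $\gamma_i=|\cLC_{p_i}|(1-p_i^{-10})p_i^{2e_i}$ and the identity $c_{p_i}=|\cLC_{p_i}|(1-p_i^{-10})p_i^{m_i}$, the first piece equals $\prod_i c_{p_i}\cdot\prod_i p_i^{e_i-m_i}\cdot X^{1/2}$, which is $\le(\prod_i c_{p_i})X^{1/2}$ because $e_i\le m_i$; the second piece equals $\prod_i c_{p_i}\cdot\prod_i p_i^{2e_i-m_i}\cdot X^{1/12}$, and since $2e_i-m_i\le m_i$ and $\prod_i p_i^{m_i}\le X^{1/3-\epsilon}$ this is at most $(\prod_i c_{p_i})X^{1/3-\epsilon}X^{1/12}=(\prod_i c_{p_i})X^{5/12-\epsilon}$. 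Both are $O\bigl((\prod_i c_{p_i})X^{1/2}\bigr)$, which finishes the argument. The step I expect to be the main obstacle is making these estimates uniform in the primes $p_i$: since the number of admissible residue tuples $\prod_i\gamma_i$ can be as large as $\prod_i p_i^{2e_i}$, the error bound rests on the precise exponents $e_i$ relative to $m_i$ produced by the proof of Theorem \ref{main1} (Tate's algorithm for the Kodaira types), and in the case $\cLC_{p_i}=a$ on carrying the Hurwitz class number factor $H(a^2-4p_i)\ll p_i^{1/2}\log p_i$ through all the estimates --- which is precisely what makes the constant $c_p=\tfrac{p-1}{2p}H(a^2-4p)$ in the error term appear in that case.
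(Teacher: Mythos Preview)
Your approach is essentially the paper's: reduce each local condition to a union of residue classes modulo $p_i^{m_i}$, combine the primes by the Chinese Remainder Theorem, and run the M\"obius sieve for the global condition $(M)$ over moduli coprime to $Q=\prod_i p_i^{m_i}$, exactly as in Lemma~\ref{MobCRT} and Proposition~\ref{CRTcE}. The main-term and error-term bookkeeping you give matches the paper's, and your introduction of a possibly smaller detection exponent $e_i\le m_i$ is harmless (and even slightly sharpens the presentation).

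There is one genuine gap. Your M\"obius identity restricts the sieve variable $d$ to $\gcd(d,Q)=1$, which is valid only if every admissible residue pair $(\alpha_i,\beta_i)$ already forces $(M)$ at $p_i$, i.e.\ forces ``not ($p_i^4\mid A$ and $p_i^6\mid B$)''. That is true for the conditions good, multiplicative, split, non-split, $a_p$, and for each individual Kodaira--N\'eron type (Tate's algorithm pins down $v_{p_i}(A)$ or $v_{p_i}(B)$ well enough), but it is \emph{false} when $\cLC_i$ is ``bad'' or ``additive'': there the residue class $(0,0)\bmod p_i$ is admissible, and pairs $(A,B)$ with $p_i^4\mid A$, $p_i^6\mid B$ slip through your sieve. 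Those contribute on the order of $X^{5/6}/p_i^{10}$ to your count, which is not absorbed by $O\bigl((\prod_j c_{p_j})X^{1/2}\bigr)$ for small $p_i$. This is exactly why the paper, after proving the result for all conditions other than bad and additive, treats those two by an induction/complementation argument (bad $=$ not good; additive $=$ not good, not split, not non-split), reducing to the already-established cases. Adding this step to your argument closes the gap.
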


In \cite[\S 3.4]{Wat}, Watkins predicted probabilities for \textrm{Kodaira--N\'eron} types at a prime $p \geq 5$ with a heuristic approach. Our results agree with those in \cite{Wat}. 
Also, we found a result \cite{BCD} of counting elliptic curves with a local condition $a_p$. In \cite{BCD} they consider the family of elliptic curves $y^2=x^3+Ax+B$ without the minimality condition $(M)$, hence it contains many isomorphic elliptic curves. 
Their dominant error term is $O(X^\frac{5}{6}/p)$, which is much weaker than $O(H(a^2-4p)X^{\frac{1}{2}})$ in Theorem \ref{main1} for a small prime $p$.

In Cremona and Sadek's recent preprint \cite{CS}, the authors consider similar problems.
They uses the general Weierstrass model, so their approach gives a natural probabilities for each local conditions at $p=2$ or 3. 
In their model, they give the probability for semi-stability, which is 60.85\%. This result seems more natural than Wong's probability for semistability 17.9\% \cite{W}.
Even though we can count elliptic curves with finitely many local conditions, we have an explicit error term. There will be potential applications in elliptic $L$-functions. 


In Section \ref{counting}, we give a proof for Theorem \ref{main1} and \ref{main4}.
In Section \ref{trace}, we state and give a proof for the Frobenius trace formula for elliptic curves. Section \ref{distribution} is devoted for the proof for Theorem \ref{rank-dist} and \ref{n-th moment}.

\section{Counting Elliptic Curves} \label{counting}

\subsection{With a single local condition}
In this section, we give proofs of Theorems \ref{main1}.
For positive integers $m$ and $n$, we say $\alpha \in \bZ/p^m\bZ$ is divided by $p^n$ if
$n < m$ and $\alpha = a p^n$ for some $a \in \bZ/p^m\bZ$, and exactly divided by $p^n$ if
there exists $u \in (\bZ/p^m\bZ)^\times$ such that $\alpha = u p^n$.
Let
$$\cD_{p^m, \alpha, \beta}(X) := \lcrc{(A,B) \in \cD(X) :  (A, B) \equiv (\alpha, \beta) \pmod{p^m}},$$
and
$$\cM_{p^m, \alpha, \beta}(X) := \lcrc{(A,B) \in \cD_{p^m, \alpha, \beta}(X) : (A,B) \textrm{ satisfies }(M)},$$
for $(\alpha, \beta) \in (\bZ/p^m\bZ)^2$ such that $p^4 \nmid \alpha$ or $p^6 \nmid \beta$.
For nonzero $(\alpha, \beta) \in (\bZ/p^m\bZ)^2$ such that $p^4 \nmid \alpha$ or $p^6 \nmid \beta$, we define
$$\cE_{p^m, \alpha, \beta}(X) := \lcrc{(A,B) \in \cM_{p^m, \alpha, \beta}(X) : 4A^3 + 27B^2 \neq 0},$$
and $\cS_{p^m, \alpha, \beta}(X)$ as the unique set satisfying $\cM_{p^m, \alpha, \beta}(X) =  \cE_{p^m, \alpha, \beta}(X) \bigsqcup \cS_{p^m, \alpha, \beta}(X)$.

\begin{lemma} \label{box}
Let $m \leq C, n \leq D$ and $\alpha \in \bZ/m\bZ, \beta \in \bZ/n\bZ$.
The cardinality of a set
$$\lcrc{(c, d) \in \bZ^2 : |c| \leq C, |d| \leq D, (c, d) \equiv (\alpha, \beta) \pmod{\bZ/m\bZ \times \bZ/n\bZ}}, $$
is
$$4 \cdot \lfrf{\frac{C+1}{m}}\cdot \lfrf{\frac{D+1}{n}} + O\lbrb{ \lfrf{\frac{C+1}{m}} +  \lfrf{\frac{D+1}{n}} +1}.
$$
\end{lemma}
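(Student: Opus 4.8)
The plan is to separate the two coordinates. The condition $(c,d)\equiv(\alpha,\beta)\pmod{\bZ/m\bZ\times\bZ/n\bZ}$ together with $|c|\leq C$, $|d|\leq D$ imposes independent constraints on $c$ and on $d$, so the cardinality in question factors as $N_m(C,\alpha)\cdot N_n(D,\beta)$, where
\[
N_m(C,\alpha):=\#\lcrc{c\in\bZ : |c|\leq C,\ c\equiv\alpha\pmod m}.
\]
It therefore suffices to prove $N_m(C,\alpha)=2\lfrf{\tfrac{C+1}{m}}+O(1)$ with an absolute implied constant (and likewise for $N_n(D,\beta)$), and then multiply the two estimates.

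To estimate $N_m(C,\alpha)$, fix the representative $\alpha\in\{0,1,\dots,m-1\}$. The hypothesis $m\leq C$ gives $\alpha<m\leq C$, so the integers in $[0,C]$ that are $\equiv\alpha\pmod m$ are exactly $\alpha,\alpha+m,\dots,\alpha+km$ with $k=\lfrf{(C-\alpha)/m}$; thus there are $\lfrf{(C-\alpha)/m}+1$ of them. Since $0\le\alpha\le m-1$ one has $\lfrf{(C+1)/m}-\lfrf{(C-\alpha)/m}\in\{0,1\}$, so this count equals $\lfrf{(C+1)/m}+O(1)$. Replacing $c$ by $-c$, the same argument shows the number of integers in $[-C,-1]$ with $c\equiv\alpha\pmod m$ is also $\lfrf{(C+1)/m}+O(1)$, while the point $c=0$ contributes at most $1$. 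Adding these contributions gives $N_m(C,\alpha)=2\lfrf{(C+1)/m}+O(1)$, and symmetrically $N_n(D,\beta)=2\lfrf{(D+1)/n}+O(1)$.

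Finally, writing $a=\lfrf{(C+1)/m}$ and $b=\lfrf{(D+1)/n}$, we obtain
\begin{align*}
N_m(C,\alpha)\cdot N_n(D,\beta)&=\lbrb{2a+O(1)}\lbrb{2b+O(1)}\\
&=4ab+O(a)+O(b)+O(1),
\end{align*}
which is exactly the asserted formula. The proof is entirely elementary; the only subtlety worth flagging is that the hypotheses $m\leq C$ and $n\leq D$ are precisely what guarantee that each residue class actually meets its interval, so that the stray ``$+1$'' terms and the discrepancies between the various floor functions really are $O(1)$ rather than quantities that could swamp the main term. I do not anticipate any genuine obstacle here beyond this bookkeeping.
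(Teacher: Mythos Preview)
Your proof is correct and entirely standard. The paper states this lemma without proof, treating it as an elementary counting fact, so your argument (factor into two one-dimensional counts, estimate each as $2\lfrf{(C+1)/m}+O(1)$, and multiply) is exactly the sort of verification the authors are implicitly leaving to the reader.
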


We define $*$-operator on a pair by  $d*(a,b) := (d^4a, d^6b)$.

\begin{lemma} \label{DMmodp}
For a nonzero $(\alpha, \beta) \in (\bZ/p^m\bZ)^2$ such that $p^4 \nmid \alpha$ or $p^6 \nmid \beta$,
$$\cD_{p^m, \alpha, \beta}(X) =  \bigsqcup_{\substack{d \leq X^{\frac{1}{12}} \\ p \nmid d }}d*\cM_{p^m, d^{-4}\alpha, d^{-6}\beta}(d^{-12}X).$$
\end{lemma}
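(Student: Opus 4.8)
The plan is to express each pair in $\cD_{p^m,\alpha,\beta}(X)$ through its ``$*$-primitive part'' and then to cut down successively by the height bound, the congruence class, and coprimality to $p$. Since $(\alpha,\beta)\neq(0,0)$ in $(\bZ/p^m\bZ)^2$, every $(A,B)\in\cD_{p^m,\alpha,\beta}(X)$ is nonzero, so I may set $d(A,B)$ to be the largest positive integer $d$ with $d^4\mid A$ and $d^6\mid B$ (the set of such $d$ is nonempty and finite). A $q$-adic count gives $v_q\big(d(A,B)\big)=\min\big(\lfloor v_q(A)/4\rfloor,\lfloor v_q(B)/6\rfloor\big)$ for every prime $q$ (with $v_q(0)=\infty$), and from this I read off that $d$ is multiplicative for $*$, i.e.\ $d\big(e*(A,B)\big)=e\cdot d(A,B)$, and that $(A,B)$ satisfies $(M)$ precisely when $d(A,B)=1$. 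Hence each such $(A,B)$ has a \emph{unique} expression $(A,B)=e*(A',B')$ with $(A',B')$ satisfying $(M)$: necessarily $e=d(A,B)$ by multiplicativity, and then $(A',B')=(A/e^4,B/e^6)$, which does satisfy $(M)$ because $d(A',B')=1$.

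Next I would feed in the naive height. Writing $(A,B)=e*(A',B')$, one has $H(E_{A,B})=\max(|e^4A'|^3,|e^6B'|^2)=e^{12}H(E_{A',B'})$, so $H(E_{A,B})\le X$ is equivalent to $H(E_{A',B'})\le e^{-12}X$; in particular $H(E_{A',B'})\ge1$ forces $e\le X^{1/12}$, so only those $e$ occur.

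Finally I would bring in the prime $p$. If $(A,B)\equiv(\alpha,\beta)\pmod{p^m}$, then ``$p\mid d(A,B)$'' would mean $p^4\mid A$ and $p^6\mid B$, and reducing these modulo $p^m$ would force $\alpha$ to be divisible by $p^4$ and $\beta$ by $p^6$ in $\bZ/p^m\bZ$, against the hypothesis; hence every $(A,B)\in\cD_{p^m,\alpha,\beta}(X)$ has $p\nmid d(A,B)$, and only $d$ coprime to $p$ can appear. For such $d$ the integers $d^4,d^6$ are units modulo $p^m$, so $e*(A',B')\equiv(\alpha,\beta)\pmod{p^m}$ is equivalent to $(A',B')\equiv(d^{-4}\alpha,d^{-6}\beta)\pmod{p^m}$; together with the height step this says exactly that $(A',B')\in\cM_{p^m,d^{-4}\alpha,d^{-6}\beta}(d^{-12}X)$. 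Thus the unique-factorization map $(A,B)\mapsto(d(A,B),(A',B'))$ identifies $\cD_{p^m,\alpha,\beta}(X)$ with
\[
\bigsqcup_{\substack{d\le X^{1/12}\\ p\nmid d}}\cM_{p^m,d^{-4}\alpha,d^{-6}\beta}(d^{-12}X),
\]
and applying $d*$ to the $d$-th summand gives the asserted decomposition; the union is disjoint because a pair lying in two summands would have its $*$-index equal to $d(A,B)$ for both.

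The routine parts are the valuation bookkeeping and the height rescaling. The step deserving care is the claim that $p\nmid d(A,B)$ for every $(A,B)\in\cD_{p^m,\alpha,\beta}(X)$: this is precisely the purpose of the side condition ``$p^4\nmid\alpha$ or $p^6\nmid\beta$'', and checking it cleanly means being careful about how divisibility by $p^4$ and $p^6$ is interpreted inside $\bZ/p^m\bZ$ when $m\le6$ (where $0$ must count as divisible by every power of $p$). I expect this to be the only real obstacle; once it is in place, the statement is just a matter of tracking how $*$ interacts with the three constraints defining $\cM_{p^m,\alpha,\beta}(X)$ — the height bound, the congruence, and $(M)$.
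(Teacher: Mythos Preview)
Your proposal is correct and follows essentially the same approach as the paper, which simply cites the analogue in \cite{Bru} and remarks that the hypothesis ``$p^4\nmid\alpha$ or $p^6\nmid\beta$'' is what forces $p\nmid d$. You have supplied the details the paper leaves implicit, including the careful treatment of divisibility in $\bZ/p^m\bZ$ for small $m$, which is indeed the only point requiring attention.
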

\begin{proof}
This is an analogue of \cite{Bru}, but note that the condition $p^4 \nmid \alpha$ or $p^6 \nmid \beta$ gives a restriction on $d$.
\end{proof}
We note that Lemma \ref{DMmodp} gives
\begin{equation*} \label{eqn:DMmodp} 
|\cM_{p^m, \alpha, \beta}(X)| = \sum_{\substack{ d \leq X^{\frac{1}{12}} \\ p \nmid d}} \mu(d)|\cD_{p^m, d^{-4}\alpha, d^{-6}\beta}(d^{-12}X)|.
\end{equation*}

\begin{proposition} \label{42}
For a prime $5 \leq p  \leq X^{\frac{1}{3m}}$ and nonzero $(\alpha, \beta) \in (\bZ/p^m\bZ)^2$ such that $p^4 \nmid \alpha$ or $p^6 \nmid \beta$,
$$ |\cE_{p^m, \alpha, \beta}(X)| =  \frac{1}{p^{2m}} \frac{p^{10}}{p^{10}-1}\frac{4}{\zeta(10)}X^{\frac{5}{6}} + O\lbrb{\frac{X^{\frac{1}{2}}}{p^m}}.$$
Furthermore,
$$|\cE_{p,0,0}(X)| =  \frac{p^8-1}{p^{10}-1}\frac{4}{\zeta(10)}X^{\frac{5}{6}} + O(pX^{\frac{1}{2}}).$$
\end{proposition}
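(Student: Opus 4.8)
The strategy is to reduce the count to the lattice point estimate of Lemma~\ref{box} through the squarefree sieve encoded in Lemma~\ref{DMmodp}, and then to discard the singular locus $4A^{3}+27B^{2}=0$ as an error term smaller than the claimed bound. As a preliminary, I record a uniform box count: for any $Y$ with $p^{m}\le Y^{1/3}$ and any residue pair $(\gamma,\delta)$, applying Lemma~\ref{box} with $C=Y^{1/3}$, $D=Y^{1/2}$ and modulus $p^{m}$ in both coordinates, and replacing each floor by the corresponding quotient up to $O(1)$, yields
\[
|\cD_{p^{m},\gamma,\delta}(Y)|=\frac{4\,Y^{5/6}}{p^{2m}}+O\!\left(\frac{Y^{1/2}}{p^{m}}\right);
\]
here $Y^{1/3}\le Y^{1/2}$ shows that the hypothesis of Lemma~\ref{box} reduces to $p^{m}\le Y^{1/3}$.

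Next, the identity recorded just after Lemma~\ref{DMmodp} gives
\[
|\cM_{p^{m},\alpha,\beta}(X)|=\sum_{\substack{d\le X^{1/12}\\ p\nmid d}}\mu(d)\,|\cD_{p^{m},\,d^{-4}\alpha,\,d^{-6}\beta}(d^{-12}X)|.
\]
I would split this sum at $D_{0}:=X^{1/12}/p^{m/4}$, which is $\ge 1$ because $p^{m}\le X^{1/3}$. For $d\le D_{0}$ one has $p^{m}\le(d^{-12}X)^{1/3}$, so the box count applies with $Y=d^{-12}X$ and contributes $\frac{4X^{5/6}}{p^{2m}d^{10}}+O\!\left(\frac{X^{1/2}}{p^{m}d^{6}}\right)$; summing the leading terms and completing the Dirichlet series, with $\sum_{p\nmid d}\mu(d)d^{-10}=\zeta(10)^{-1}(1-p^{-10})^{-1}$, produces the main term
\[
\frac{4X^{5/6}}{p^{2m}}\sum_{p\nmid d}\frac{\mu(d)}{d^{10}}=\frac{1}{p^{2m}}\,\frac{p^{10}}{p^{10}-1}\,\frac{4}{\zeta(10)}\,X^{5/6},
\]
while the $O$-terms sum to $O(X^{1/2}/p^{m})$ and the tail of the completed series contributes $O(X^{1/12}p^{m/4})$. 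For $D_{0}<d\le X^{1/12}$ the interval $|A|\le(d^{-12}X)^{1/3}$ has length less than $2p^{m}$, hence meets the prescribed residue class modulo $p^{m}$ in $O(1)$ integers, so $|\cD_{p^{m},\cdot,\cdot}(d^{-12}X)|=O\!\left(d^{-6}X^{1/2}/p^{m}+1\right)$ and the sum over this range is $O(X^{1/12}p^{m/4}+X^{1/12})$. Since $p^{m}\le X^{1/3}$ is equivalent to $X^{1/12}p^{m/4}\le X^{1/2}/p^{m}$, every one of these errors is $O(X^{1/2}/p^{m})$, and hence
\[
|\cM_{p^{m},\alpha,\beta}(X)|=\frac{1}{p^{2m}}\,\frac{p^{10}}{p^{10}-1}\,\frac{4}{\zeta(10)}\,X^{5/6}+O\!\left(\frac{X^{1/2}}{p^{m}}\right).
\]

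To pass from $\cM$ to $\cE$, note that $\cS_{p^{m},\alpha,\beta}(X)$ is contained in the set of integral pairs with $4A^{3}+27B^{2}=0$ and $H(E_{A,B})\le X$, which is exactly $\{(-3t^{2},\pm 2t^{3}):3t^{2}\le X^{1/3}\}$ and so has $O(X^{1/6})$ elements, uniformly in $(\alpha,\beta)$; as $X^{1/6}\le X^{1/2}/p^{m}$ whenever $p^{m}\le X^{1/3}$, this is absorbed into the error and the first assertion follows. For the remaining assertion I would argue by inclusion--exclusion over residues modulo $p$: write $\cE(X)=\bigsqcup_{(\alpha,\beta)\in(\bZ/p\bZ)^{2}}\cE_{p,\alpha,\beta}(X)$, where $\cE_{p,0,0}(X):=\{(A,B)\in\cM_{p,0,0}(X):4A^{3}+27B^{2}\ne 0\}$ is the natural extension of the definition, and combine it with the unconditioned count $|\cE(X)|=\frac{4}{\zeta(10)}X^{5/6}+O(X^{1/2})$, which is the same sieve run with no congruence (Brumer's count). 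By the first assertion with $m=1$, each of the $p^{2}-1$ nonzero residue classes contributes $\frac{p^{8}}{p^{10}-1}\,\frac{4}{\zeta(10)}X^{5/6}+O(X^{1/2}/p)$; subtracting their sum from $|\cE(X)|$ gives
\[
|\cE_{p,0,0}(X)|=\left(1-\frac{(p^{2}-1)p^{8}}{p^{10}-1}\right)\frac{4}{\zeta(10)}X^{5/6}+O(pX^{1/2})=\frac{p^{8}-1}{p^{10}-1}\,\frac{4}{\zeta(10)}X^{5/6}+O(pX^{1/2}),
\]
where the last equality uses $p^{10}-1-(p^{2}-1)p^{8}=p^{8}-1$.

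The one delicate point is the bookkeeping in the $d$-sum: one must verify that the range $d>D_{0}$, where the box asymptotic degenerates because $p^{m}$ overtakes the rescaled side length, still contributes only $O(X^{1/2}/p^{m})$, and that every error term --- including the tail of the completed Dirichlet series --- is controlled \emph{uniformly in $p$}, which in each case reduces to the single inequality $p^{m}\le X^{1/3}$. Everything else is a routine computation.
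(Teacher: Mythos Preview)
Your argument is correct and follows essentially the same route as the paper: M\"obius inversion via Lemma~\ref{DMmodp}, the box count of Lemma~\ref{box}, removal of the singular locus, and inclusion--exclusion against Brumer's count for the $(0,0)$ class. The only cosmetic difference is that you split the $d$-sum at $D_0=X^{1/12}/p^{m/4}$ to respect the hypothesis of Lemma~\ref{box}, whereas the paper applies the box estimate uniformly with an additional $+1$ in the error (valid for all $d$) and absorbs $\sum_{d\le X^{1/12}}1=O(X^{1/12})$ directly; both bookkeepings lead to the same $O(X^{1/2}/p^m)$.
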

\begin{proof}
We have
$|\cE_{p^m,\alpha,\beta}(X)| = |\cM_{p^m,\alpha,\beta}(X)| +O( | \cS_{p^m, \alpha, \beta}(X) | )$,
and $|\cS_{p^m, \alpha, \beta}(X)| \ll O(X^{\frac 16} /p^m)$.
On the other hand 
$$|\cD_{p^m, d^{-4}\alpha, d^{-6}\beta}(\frac{X}{d^{12}})| = \frac{4X^{\frac{5}{6}}}{p^{2m}d^{10}} + O\left( \frac{X^{\frac{1}{2}}}{p^{m}d^6} +1 \right)$$
by Lemma \ref{box}.
Together with Lemma \ref{DMmodp} for $(\alpha, \beta)$ such that $p^4 \nmid \alpha$ or $p^6\nmid \beta$,  we have
\begin{align*}
|\cE_{p^m,\alpha,\beta}(X)|&=  |\cM_{p^m,\alpha,\beta}(X)| +O(|\cS_{p^m, \alpha, \beta}(X) | )
= \sum_{\substack{d \leq X^{\frac{1}{12}} \\ p \nmid d}} \mu(d)|\cD_{p^m, d^{-4}\alpha, d^{-6}\beta}(\frac{X}{d^{12}})| +O\lbrb{\frac{X^{\frac{1}{6}}  }{p^m}},
\end{align*}
which is equal to 
\begin{align*}
&=\sum_{\substack{d \leq X^{\frac{1}{12}} \\ p\nmid d }} \mu(d) \lbrb{\frac{4X^{\frac{5}{6}}}{p^{2m}d^{10}} + O\left( \frac{X^{\frac{1}{2}}}{p^md^6}  + 1\right) } +O\lbrb{\frac{X^{\frac 16} }{p^m}} \\
&= \sum_{\substack{d =1 \\ p \nmid d}}^{\infty} \mu(d) \lbrb{\frac{4X^{\frac{5}{6}}}{p^{2m}d^{10}} + O\left( \frac{X^{\frac{1}{2}}}{p^md^6} \right)} +O\lbrb{X^{\frac{1}{12}} + \frac{X^{\frac 16} }{p^m}}  -  \sum_{\substack{d \geq X^{\frac{1}{12}} \\ p \nmid d}} \mu(d) \lbrb{\frac{4X^{\frac{5}{6}}}{p^{2m}d^{10}} + O\left(\frac{X^{\frac{1}{2}}}{p^md^6}  \right)} \\
&= \frac{4X^{\frac{5}{6}}}{p^{2m} } \lbrb{\sum_{\substack{d =1 \\ p \nmid d}}^{\infty} \frac{\mu(d)}{d^{10}}} + O\left(
\frac{X^{\frac{1}{12}}}{p^m}  + \frac{X^{\frac{1}{2}}}{p^m} 
\sum_{d=1}^{\infty}\frac{1}{d^6} + X^{\frac{1}{12}} + \frac{X^{\frac{1}{6}}}{p^m}\right) \\
&= \frac{1}{p^{2m} }\frac{p^{10}}{p^{10}-1} \frac{4}{\zeta(10)}X^{\frac{5}{6}} + O\lbrb{\frac{X^{\frac{1}{2}}}{p^m}}.
\end{align*}
Hence we have the first one.
On the other hand, the computation of $|\cE(X)|$ from \cite[Lemma 4.3]{Bru} and 
$$|\cE(X)| = |\cE_{p,0,0}(X)| + \sum_{\alpha, \beta \in (\bZ/p\bZ)^\times} |\cE_{p, \alpha, \beta}(X)|,$$
give the result for $|\cE_{p,0,0}(X)|$.
\end{proof}

We collect auxiliary lemmas on the number of elliptic curves corresponding to each local condition. 
For the good and bad reduction conditions, we use the cardinality of 
\begin{equation*}
\lcrc{(\alpha, \beta) \in (\bZ/p\bZ)^2 : 4\alpha^3 + 27\beta^2 \equiv 0 \pmod{p}},
\end{equation*}
which is $p$. 
Let $K$ be an imaginary quadratic field and $\cO$ be an order of $K$. We define the Hurwitz class number
$H(\cO)$ by 
$$ H(\cO) := \sum_{\cO \subset \cO' \subset \cO_K}\frac{2}{|\cO'^\times|}h(\cO'),$$
where $h(\cO')$ is the class number of an order $\cO'$.
We also write $H(\cO)$ as $H(D)$, when $D$ is the discriminant of $\cO$.
By \cite[Theorem 14.18]{Cox}, we have 
\begin{equation} \label{eqn:Cox}
\labrab{\lcrc{(\alpha, \beta) \in (\bZ/p\bZ)^2 : a_p(E_{\alpha, \beta}) = a} } = \frac{p-1}{2}H(a^2-4p).
\end{equation}
For multiplicative reduction, we need
\begin{lemma} \label{singpn}
Let $p\geq 5$ be a prime and $m \geq 1$ be an integer. Then,
\begin{equation*}
\labrab{ \lcrc{ (\alpha, \beta) \in \lbrb{(\bZ/p^m\bZ)^\times}^2 : 4\alpha^3 + 27\beta^2 \equiv 0 \pmod{p^m}}  } = p^m\lbrb{1- \frac{1}{p}},
\end{equation*}
and
$$\labrab{ \lcrc{ (\alpha, \beta) \in \lbrb{(\bZ/p\bZ)^\times}^2 : 4\alpha^3 + 27\beta^2 \equiv 0 \pmod{p}, \beta = -\beta'^2 } } = \frac{p-1}{2}.$$
\end{lemma}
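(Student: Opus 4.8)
The plan is to reduce both assertions to a single explicit parametrization of the nodal pairs by the coordinate of the singular point. Since $p \ge 5$, both $2$ and $3$ are units in $\bZ/p^m\bZ$, so I claim the map
\[
x_0 \longmapsto (\alpha,\beta) = (-3x_0^2,\ 2x_0^3)
\]
is a bijection from $(\bZ/p^m\bZ)^\times$ onto the set $\lcrc{(\alpha,\beta)\in((\bZ/p^m\bZ)^\times)^2 : 4\alpha^3+27\beta^2\equiv 0 \pmod{p^m}}$. For the forward direction one checks that $-3x_0^2$ and $2x_0^3$ are units and that $4(-3x_0^2)^3+27(2x_0^3)^2 = -108x_0^6+108x_0^6 = 0$; geometrically, $x^3+\alpha x+\beta = (x-x_0)^2(x+2x_0)$. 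For the inverse, given such an $(\alpha,\beta)$ set $x_0 := -3\beta(2\alpha)^{-1}$, which is a unit because $\alpha,\beta$ are; then $x_0^2 = 9\beta^2(4\alpha^2)^{-1} = -\alpha/3$ precisely because $27\beta^2\equiv -4\alpha^3$, whence $x_0^3 = x_0\cdot x_0^2 = (-3\beta/2\alpha)(-\alpha/3) = \beta/2$, so $\alpha = -3x_0^2$ and $\beta = 2x_0^3$. The two constructions are mutually inverse; in particular $x_0\mapsto(-3x_0^2,2x_0^3)$ is injective since $x_0$ is recovered as $-3\beta(2\alpha)^{-1}$. Hence the claimed bijection holds.

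The first formula is then immediate: the cardinality in question equals $\labrab{(\bZ/p^m\bZ)^\times} = p^{m-1}(p-1) = p^m(1-1/p)$.

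For the second formula, specialize to $m=1$ and intersect with the condition $\beta = -\beta'^2$. Under the bijection this asks that $-\beta = -2x_0^3$ be a nonzero square in $\bF_p$; writing $-2x_0^3 = (-2x_0)\cdot x_0^2$ and noting $x_0^2$ is already a square, this is equivalent to $-2x_0$ being a square. As $x_0$ runs over $\bF_p^\times$, so does $-2x_0$, and exactly $(p-1)/2$ of the elements of $\bF_p^\times$ are squares (here we use that $p$ is odd); hence exactly $(p-1)/2$ of the parametrizing $x_0$ satisfy the extra condition, which gives the stated count.

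I do not anticipate a serious obstacle. The only points needing a moment of care are the verification that the parametrization makes sense over the ring $\bZ/p^m\bZ$ rather than a field — which works because $2\alpha$ is a unit, so no genuine square root has to be extracted — and, for the second part, the elementary fact that exactly half of $\bF_p^\times$ consists of squares, valid since $p\ge 5 > 2$.
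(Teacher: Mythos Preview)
Your argument is correct. The paper states this lemma without proof, so there is nothing to compare against; your parametrization $x_0\mapsto(-3x_0^2,2x_0^3)$ with inverse $x_0=-3\beta(2\alpha)^{-1}$ is a clean and complete justification of both counts.
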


The following is a result of Tate's algorithm for $E_{A,B}$.

\begin{lemma} \label{Tatealg}
Let $p \geq 5$ and $E_{A,B} : y^2 = x^3 + Ax + B$ be a minimal model over $\bQ_p$. Then, $E$ has
$$
\left\{ \begin{array}{cccc}
\textrm{Type $\mathrm{II}$ if and only if }  p \mid A, p \| B &  \\ 
\textrm{Type $\mathrm{III}$ if and only if } p \| A, p^2 \mid B &  \\ 
\textrm{Type $\mathrm{IV}$ if and only if } p^2 \mid A, p^2 \| B &   \\ 
\end{array} \right.
\qquad
\left\{ \begin{array}{ccc}
\textrm{Type $\mathrm{II}^*$ if and only if } p^4 \mid A, p^5 \| B \\
\textrm{Type $\mathrm{III}^*$ if and only if }  p^3 \| A, p^5 \mid B \\
\textrm{Type $\mathrm{IV}^*$ if and only if } p^3 \mid A, p^4 \| B
\end{array} \right.
$$
Furthermore, for the fixed positive integer $m$, an elliptic curve 
$E$ has reduction type $\mathrm{I}_m^*$ at $p$ if and only if $p^2 \| A$ or $p^3 \|  B$,
and $v_p(4(A/p^2)^3 + 27(B/p^3)^2) = m$.
\end{lemma}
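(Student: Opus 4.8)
The plan is to run Tate's algorithm on the Weierstrass equation $y^2 = x^3 + Ax + B$ over $\bQ_p$ and, at each step, translate its divisibility test into a condition on $a := v_p(A)$ and $b := v_p(B)$. This works cleanly because $p \geq 5$ makes every auxiliary quantity transparent: $b_2 = 0$, $b_4 = 2A$, $b_6 = 4B$, $b_8 = -A^2$, $c_4 = -48A$, $c_6 = -864B$, $\Delta = -16(4A^3+27B^2)$, so $v_p(b_6) = b$, $v_p(b_8) = 2a$, $v_p(\Delta) = v_p(4A^3+27B^2)$, and — since $a_1 = a_2 = a_3 = 0$ — none of the coordinate changes in the algorithm actually move anything. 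First I would record that, as $p \nmid 4\cdot 27$, $p \mid \Delta$ together with $p \mid A$ forces $p \mid B$, while $p \mid A$ and $p \mid B$ force $p \mid \Delta$; hence the reduction is additive exactly when $p \mid A$ and $p \mid B$, in which case the singular point of the reduced curve is already at the origin, so the algorithm starts with no normalization. (If $p \nmid \Delta$ the reduction is good, and if $p \mid \Delta$ but $p \nmid A$ it is multiplicative of type $\mathrm{I}_{v_p(\Delta)}$; neither occurs in the statement, so henceforth $a, b \geq 1$.) The first branches then fall out directly: $p^2 \nmid a_6 = B$, i.e.\ $b = 1$, gives type $\mathrm{II}$; assuming $b \geq 2$, $p^3 \nmid b_8 = -A^2$, i.e.\ $a = 1$, gives type $\mathrm{III}$; assuming $a \geq 2$, $p^3 \nmid b_6 = 4B$, i.e.\ $b = 2$, gives type $\mathrm{IV}$.

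Now $a \geq 2$, $b \geq 3$, and the algorithm inspects $P(T) = T^3 + (A/p^2)T + (B/p^3) \in \bF_p[T]$, with discriminant $-\bigl(4(A/p^2)^3 + 27(B/p^3)^2\bigr) \bmod p$. This is the delicate step, where I would be most careful. Since $p \geq 5$, the only monic cube in $\bF_p[T]$ with vanishing $T^2$-coefficient is $T^3$, so $P$ has a triple root iff $A/p^2 \equiv B/p^3 \equiv 0 \pmod p$, i.e.\ $p^3 \mid A$ and $p^4 \mid B$; equivalently $P$ does \emph{not} have a triple root iff $p^2 \| A$ or $p^3 \| B$. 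In that case write $4A^3 + 27B^2 = p^6 Q$ with $Q := 4(A/p^2)^3 + 27(B/p^3)^2 \in \bZ_p$; then $v_p(\Delta) = 6 + v_p(Q)$, the type is $\mathrm{I}_0^*$ when $p \nmid Q$ (three distinct roots of $P$) and $\mathrm{I}_{v_p(Q)}^*$ when $p \mid Q$ ($P$ has a double, not triple, root), and putting $m = v_p(Q)$ gives the ``furthermore'' clause for every $m \geq 1$. (As part of the write-up one checks that when $m \geq 1$ the requirement $v_p(Q) = m$ already forces both $p^2 \| A$ and $p^3 \| B$ — so nothing is lost by the ``or'' — and that when ``$P = T^3$'' fails through the other disjunct, $P$ in fact has three distinct roots, i.e.\ $v_p(Q) = 0$ and the type is $\mathrm{I}_0^*$.)

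It remains to treat $P = T^3$, i.e.\ $p^3 \mid A$, $p^4 \mid B$. Here the algorithm looks at $Y^2 - B/p^4 \in \bF_p[Y]$, which has distinct roots iff $p^4 \nmid B$, i.e.\ $b = 4$, giving type $\mathrm{IV}^*$; otherwise $b \geq 5$ and $p^4 \nmid A$, i.e.\ $a = 3$, gives type $\mathrm{III}^*$; otherwise $a \geq 4$ and $p^6 \nmid B$, i.e.\ $b = 5$ (since $b \geq 5$), gives type $\mathrm{II}^*$; the sole remaining possibility $a \geq 4$, $b \geq 6$ means $p^4 \mid A$, $p^6 \mid B$, which is non-minimal, contradicting the hypothesis. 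Reading off the divisibility conditions branch by branch reproduces the table. (As a sanity check one can instead read the Kodaira type off $(v_p(c_4), v_p(c_6), v_p(\Delta)) = (a, b, v_p(4A^3+27B^2))$ via the standard classification for $p \geq 5$ and solve for $a$ and $b$; this recovers the same list.) Apart from Step~6 — the $\mathrm{I}_n^*$ versus $\mathrm{I}_0^*$ dichotomy and its reformulation in terms of $A$ and $B$ — and the elementary parity remark in Step~4 ($v_p(b_8) = 2a$, so $p^3 \nmid b_8 \Leftrightarrow a = 1$), every step is an immediate translation, so the only real content of the proof is verifying that Step~6 produces exactly the stated condition.
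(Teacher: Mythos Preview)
The paper does not supply a proof of this lemma at all; it is stated with the one-line justification ``The following is a result of Tate's algorithm for $E_{A,B}$'' and then used. Your proposal is exactly the argument that sentence points to: a step-by-step run of Tate's algorithm on $y^2=x^3+Ax+B$ with $p\ge 5$, exploiting $a_1=a_2=a_3=0$ so that no coordinate change is ever needed and every branch condition becomes a divisibility constraint on $A$ and $B$. The logic is correct, including the delicate Step~6 analysis (triple root of $P(T)=T^3+(A/p^2)T+(B/p^3)$ forces $p^3\mid A$ and $p^4\mid B$ since $p\ne 3$; otherwise $v_p(\Delta)=6+v_p(Q)$ with $Q=4(A/p^2)^3+27(B/p^3)^2$, giving $\mathrm I_{v_p(Q)}^*$), and your consistency checks that $v_p(Q)=m\ge 1$ forces $p^2\|A$ \emph{and} $p^3\|B$ are right.

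One small slip to fix in the write-up: in the $\mathrm{IV}^*$ step you write ``$Y^2-B/p^4$ has distinct roots iff $p^4\nmid B$, i.e.\ $b=4$''. The condition should be $p^5\nmid B$ (equivalently $p^4\|B$), not $p^4\nmid B$; your conclusion $b=4$ is correct, only the intermediate phrase is mistyped.
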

Finally for the local conditions $\mathrm{I}_m^*$, we need
\begin{lemma} \label{lemmaforIm}
For $m > 0$, we have
$$|\lcrc{ (\alpha, \beta) \in (\bZ/p^{m+6}\bZ)^2 : p^2 \| \alpha, p^3 \| \beta, p^{m+6} || \lbrb{4\alpha^3 + 27\beta^2}  }| = p^{m+5}(p-1)^2. $$
For $m = 0$, we have
$$|\lcrc{ (\alpha, \beta) \in (\bZ/p^{6}\bZ)^2 : p^2 \| \alpha, p^3 \| \beta, p^{6} \| \lbrb{4\alpha^3 + 27\beta^2}  }| = p^{6}(p-1). $$
\end{lemma}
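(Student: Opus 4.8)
The plan is to trade the congruence description for a count of pairs of units modulo a power of $p$ and then feed it into Lemma \ref{singpn}. Since $p^2\mid\alpha$ and $p^3\mid\beta$, write $\alpha=p^2a$ and $\beta=p^3b$; then $4\alpha^3+27\beta^2=p^6(4a^3+27b^2)$, so the condition $p^{m+6}\,\|\,(4\alpha^3+27\beta^2)$ becomes exactly $v_p(4a^3+27b^2)=m$. The condition $p^2\|\alpha$ (resp.\ $p^3\|\beta$) says $p\nmid a$ (resp.\ $p\nmid b$), and $(\alpha,\beta)\mapsto(a,b)$ identifies the pairs $(\alpha,\beta)\in(\bZ/p^{m+6}\bZ)^2$ under consideration with pairs $(a,b)$, $a\in(\bZ/p^{m+4}\bZ)^\times$, $b\in(\bZ/p^{m+3}\bZ)^\times$. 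Finally, whether $v_p(4a^3+27b^2)=m$ depends only on $a,b\bmod p^{m+1}$, so it is enough to understand these counts after reduction modulo $p^{m+1}$.

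For $m\ge 1$ the left-hand side therefore equals $p^3\cdot p^2\cdot N_m=p^5N_m$, the factors $p^3=p^{(m+4)-(m+1)}$ and $p^2=p^{(m+3)-(m+1)}$ coming from lifting the classes of $a$ and $b$ from modulus $p^{m+1}$, where $N_m$ counts $(a,b)\in\bigl((\bZ/p^{m+1}\bZ)^\times\bigr)^2$ with $v_p(4a^3+27b^2)=m$. Writing $A_{\ge j}$ for the number of such unit pairs with $p^j\mid 4a^3+27b^2$, reduction modulo $p^j$ and the first identity of Lemma \ref{singpn} (with its $m$ taken to be $j$) give $A_{\ge j}=p^{2(m+1-j)}\cdot p^{j-1}(p-1)=p^{2m+1-j}(p-1)$ for $1\le j\le m+1$. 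Hence $N_m=A_{\ge m}-A_{\ge m+1}=p^{m+1}(p-1)-p^{m}(p-1)=p^{m}(p-1)^2$, and the left-hand side is $p^{m+5}(p-1)^2$, as claimed.

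For $m=0$ the same strategy applies, but now one must also retain the pairs with $v_p(\alpha)>2$ or $v_p(\beta)>3$: in each of those ranges one of $v_p(4\alpha^3)$, $v_p(27\beta^2)$ exceeds $6$ while the other equals $6$, so $v_p(4\alpha^3+27\beta^2)=6$ holds automatically and the contribution is an elementary count of residues; in the range $v_p(\alpha)=2$, $v_p(\beta)=3$ the condition becomes $4a^3+27b^2\not\equiv0\pmod p$, which by the first part of Lemma \ref{singpn} holds for $(p-1)^2-(p-1)$ unit pairs modulo $p$. Adding these contributions, each weighted by the appropriate lift factor, telescopes to $p^{6}(p-1)$. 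The only substantive input is Lemma \ref{singpn}; the step most prone to error is the modulus bookkeeping — checking that the valuation condition is genuinely detected modulo $p^{m+1}$ and that the exponents $m+4,\,m+3,\,m+1$ line up so the lift factors are exactly $p^3$ and $p^2$ — together with, for $m=0$, the final telescoping of the several sub-cases.
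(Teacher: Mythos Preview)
The paper states Lemma \ref{lemmaforIm} without proof, so there is no argument to compare against; your task is simply to supply one, and your approach via Lemma \ref{singpn} does this correctly.

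For $m\ge 1$ your reduction is clean and correct: the bijection $(\alpha,\beta)\leftrightarrow(a,b)$ with $a\in(\bZ/p^{m+4}\bZ)^\times$, $b\in(\bZ/p^{m+3}\bZ)^\times$, the observation that $v_p(4a^3+27b^2)=m$ is detected modulo $p^{m+1}$, and the telescoping $N_m=A_{\ge m}-A_{\ge m+1}=p^m(p-1)^2$ via Lemma \ref{singpn} are all sound, and the lift factors $p^3,p^2$ are right.

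For $m=0$ you have correctly noticed that the literal reading ``$p^2\|\alpha$ \emph{and} $p^3\|\beta$'' yields only $p^5\cdot\bigl((p-1)^2-(p-1)\bigr)=p^5(p-1)(p-2)$, not the claimed $p^6(p-1)$. The stated count matches the $\mathrm{I}_0^*$ condition from Lemma \ref{Tatealg}, namely $p^2\mid\alpha$, $p^3\mid\beta$, and $\bigl(p^2\|\alpha$ \emph{or} $p^3\|\beta\bigr)$; equivalently, one excludes only the range $p^3\mid\alpha$ and $p^4\mid\beta$ (which is in any case forced out by $v_p(4\alpha^3+27\beta^2)=6$). Your inclusion of the two extra ranges $\{p^2\|\alpha,\ p^4\mid\beta\}$ and $\{p^3\mid\alpha,\ p^3\|\beta\}$, each contributing $p^5(p-1)$, then gives $p^5(p-1)(p-2)+2p^5(p-1)=p^6(p-1)$ as required. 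It would improve the exposition to state explicitly that these are the two ranges meant (so that the case $p^3\mid\alpha$ and $p^4\mid\beta$ is visibly excluded), and to record the two counts $p^5(p-1)$ rather than leaving the ``telescoping'' implicit.
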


\begin{proof}[Proof of Theorem \ref{main1}.]
We give a proof for $\cLC$ = good, $a_p(E), \mathrm{II}^*$, and $\mathrm{I}_m^*$.
Let $A:=\lcrc{(\alpha, \beta) \in (\bZ/p\bZ)^2 : 4\alpha^3 + 27\beta^2 \equiv 0 \pmod{p}}$. Now, the number of elliptic curves whose height is less than $X$, and have good reduction at $p \geq 5$ is
$$|\cE_{p}^{\mathrm{good}}(X)| = \sum_{(\alpha, \beta) \not\in A} | \cE_{p, \alpha, \beta}(X)|.$$
By Proposition \ref{42}, we have
\begin{eqnarray*}
|\cE_{p}^{\mathrm{good}}(X)| &=& \frac{(p^2-p)}{p^2}\frac{p^{10}}{p^{10}-1}\frac{4}{\zeta(10)}X^{\frac{5}{6}} +O((p-1)X^{\frac{1}{2}}).
\end{eqnarray*}

For $a$ in a Weil bound, 
\begin{equation*}
\frac{(p-1)H(a^2-4p)}{2p^2}\frac{p^{10}}{p^{10}-1}\frac{4}{\zeta(10)}X^{\frac{5}{6}}+O\left( (p-1)\frac{H(a^2-4p)}{2p}X^{\frac{1}{2}}\right),
\end{equation*}
by  (\ref{eqn:Cox}).
Similarly, by Lemma \ref{Tatealg}, an elliptic curve $E_{A,B}$ has type $\mathrm{II}^*$ at $p$ if and only if $(A,B)$ satisfies $p^4 \mid A, p^5 \|B$ modulo $p^7$.
Since there are $p^3(p^2-p)$ such pairs, we have 
$$|\cE_{p}^{\mathrm{II}^*}(X)| = 
\frac{p^3(p^2-p)}{p^{14}}\frac{p^{10}}{p^{10}-1}\frac{4}{\zeta(10)}X^{\frac{5}{6}} + O\lbrb{p^3(p^2-p)\frac{X^{\frac 12}}{p^7}}.$$

Now, $\mathrm{I}_m^*$-case directly followed by Lemma \ref{lemmaforIm}. More concretely,
\begin{equation*}
|\cE_p^{\mathrm{I}_m^* }(X)| = \sum_{\alpha, \beta} |\cE_{p^{m+6}, \alpha, \beta}(X)|
 = \frac{p^{m+5}(p-1)^2}{p^{2m+12}}\frac{p^{10} }{p^{10}-1} \frac{4}{\zeta(10)}X^{\frac56} + O\lbrb{\frac{p^{m+5}(p-1)^2}{p^{m+6}}X^{\frac12}}.
\end{equation*}
Other cases can be proven similarly.
\end{proof}

\subsection{With finitely many local conditions}
In this section, we give a proof of Theorem \ref{main4}.
Let $P = \lcrc{p_i^{m_i}}$ be a finite set of powers of primes such that $p_i \geq 5$, $I=I_{P}$ be a set of nonzero $(\alpha_i, \beta_i) \in (\bZ/p_i^{m_i}\bZ)^2$. We also define a convolution on $I$ by
$d*I := \lcrc{(d^4\alpha_i, d^6\beta_i)}. $
Let 
$$\cD_{P, I}(X):= \lcrc{(A, B) \in \cD(X) : (A,B) \equiv (\alpha_i, \beta_i) \pmod{p_i^{m_i}} \textrm{ for all } p_i^{m_i} \in P}, $$
and
$$\cM_{P, I}(X) := \lcrc{(A,B) \in \cD_{P, I}(X) : (A, B) \textrm{ satisfies }(M)}.$$
Then, the analogue of Lemma \ref{DMmodp} is as follows.
\begin{lemma} \label{MobCRT}
Assume that all pairs in $I$ satisfy $p_i^4 \nmid \alpha_i$ or $p_i^6 \nmid \beta_i$. Then,
$$ \cD_{P, I}(X) = \bigsqcup_{\substack{d \leq X^{\frac{1}{12}} \\ p_i \nmid d }} d*\cM_{P, d^{-1}*I}(d^{-12}X).$$
\end{lemma}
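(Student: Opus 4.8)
The plan is to mimic the proof of Lemma \ref{DMmodp}, replacing the single congruence condition at $p$ by the system of congruences prescribed by $I$ at the primes $p_i$, which by the Chinese Remainder Theorem is a single congruence modulo $N := \prod_i p_i^{m_i}$. First I would show the right-hand side is contained in the left: if $(A,B) = d*(A_0,B_0)$ with $(A_0,B_0) \in \cM_{P, d^{-1}*I}(d^{-12}X)$ and $p_i \nmid d$ for all $i$, then $H(E_{A,B}) = d^{12} H(E_{A_0,B_0}) \leq X$, and $(A,B) \equiv (d^4 d^{-4}\alpha_i, d^6 d^{-6}\beta_i) = (\alpha_i, \beta_i) \pmod{p_i^{m_i}}$, using that $d$ is invertible mod $p_i^{m_i}$; so $(A,B) \in \cD_{P,I}(X)$.

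For the reverse inclusion, take $(A,B) \in \cD_{P,I}(X)$ and let $d$ be the largest positive integer with $d^4 \mid A$ and $d^6 \mid B$; write $(A,B) = d*(A_0,B_0)$. Maximality of $d$ forces $(A_0,B_0)$ to satisfy condition $(M)$, i.e. $(A_0,B_0) \in \cM(d^{-12}X)$ after checking the height bound $H(E_{A_0,B_0}) = d^{-12}H(E_{A,B}) \leq d^{-12}X$, and from $d^{12} \mid \max(|A|^3,|B|^2) \leq X$ we get $d \leq X^{1/12}$. The key point, exactly as flagged in the proof of Lemma \ref{DMmodp}, is that the hypothesis $p_i^4 \nmid \alpha_i$ or $p_i^6 \nmid \beta_i$ restricts $d$: if some $p_i \mid d$, then $p_i^4 \mid d^4 \mid A$ and $p_i^6 \mid d^6 \mid B$, forcing $p_i^4 \mid \alpha_i$ and $p_i^6 \mid \beta_i$, contradicting the assumption on $I$. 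Hence $p_i \nmid d$ for every $i$, and then $(\alpha_i,\beta_i) \equiv (A,B) \equiv (d^4 A_0, d^6 B_0) \pmod{p_i^{m_i}}$ with $d$ invertible mod $p_i^{m_i}$ gives $(A_0,B_0) \equiv (d^{-4}\alpha_i, d^{-6}\beta_i) \pmod{p_i^{m_i}}$, i.e. $(A_0,B_0) \in \cM_{P, d^{-1}*I}(d^{-12}X)$.

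Finally I would check that the union on the right is disjoint: the value of $d$ is an intrinsic invariant of the pair $(A,B)$ (the largest $d$ with $d*(\bZ\times\bZ) \ni (A,B)$ in the sense that $d^4 \mid A$, $d^6 \mid B$, combined with the $(M)$-condition on $(A_0,B_0)$), so distinct $d$'s contribute disjoint sets, and for fixed $d$ the map $(A_0,B_0) \mapsto d*(A_0,B_0)$ is injective. I do not expect any real obstacle here; the only subtlety — and the one genuinely new ingredient relative to Lemma \ref{DMmodp} — is bookkeeping the simultaneous congruences at several primes and verifying that the restriction $p_i \nmid d$ holds for all $i$ at once, which follows from the componentwise hypothesis on $I$ as above.
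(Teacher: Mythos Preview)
Your proposal is correct and follows exactly the approach the paper intends: the paper states Lemma~\ref{MobCRT} without proof, calling it ``the analogue of Lemma~\ref{DMmodp}'', and your argument is precisely that analogue---the Brumer-style decomposition via the maximal $d$ with $d^4\mid A$, $d^6\mid B$, with the one new observation that the hypothesis on each $(\alpha_i,\beta_i)$ forces $p_i\nmid d$ for every $i$ simultaneously.
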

We also define 
$$\cE_{P, I}(X) := \lcrc{(A, B) \in \cM_{P, I}(X) : 4A^3 + 27B^2 \neq 0},$$
and $\cS_{P, I}(X)$ as the unique set satisfying $\cM_{P, I}(X) = \cE_{P, I}(X)\bigsqcup \cS_{P, I}(X)$. The bound of error term $\cS_{P, I}(X)$  can be obtained by following lemma.

\begin{lemma} \label{cSCRT}
For $P, I$, we have
$$|\cS_{P, I}(X)| \leq \frac{4^{|I|}}{\prod_i p_i^{m_i}}X^{\frac{1}{6}} + O(1). $$
\end{lemma}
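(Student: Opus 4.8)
The plan is to bound $\cS_{P,I}(X)$ by recognizing what kind of pair $(A,B)$ it contains. By definition $\cS_{P,I}(X) = \cM_{P,I}(X) \setminus \cE_{P,I}(X)$, so every element satisfies the congruences $(A,B) \equiv (\alpha_i,\beta_i) \pmod{p_i^{m_i}}$, satisfies $(M)$, and has $4A^3 + 27B^2 = 0$; the last condition forces $A = -3c^2$, $B = 2c^3$ for some $c \in \bZ$ (up to the usual sign/scaling, these are exactly the singular Weierstrass equations, which form a one-parameter family). First I would parametrize: the pairs with $4A^3+27B^2=0$ and $H(E_{A,B}) \le X$ are indexed by integers $c$ with $|c| \ll X^{1/6}$, since $|A| = 3c^2 \le X^{1/3}$ gives $|c| \ll X^{1/6}$ (and the bound from $|B|=2|c|^3 \le X^{1/2}$ is of the same order). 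So $|\cS_{P,I}(X)|$ is at most the number of such $c$ satisfying the imposed congruences.

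Next, each congruence $(A,B) \equiv (\alpha_i,\beta_i) \pmod{p_i^{m_i}}$ becomes, after substituting $A = -3c^2$, $B = 2c^3$, a condition on $c \bmod p_i^{m_i}$: namely $-3c^2 \equiv \alpha_i$ and $2c^3 \equiv \beta_i \pmod{p_i^{m_i}}$ (here $p_i \ge 5$, so $2$ and $3$ are units). The number of solutions $c \bmod p_i^{m_i}$ to $c^2 \equiv -\alpha_i/3$ is at most $2$ (a quadratic has at most $2$ roots over $\bZ/p_i^{m_i}\bZ$ when $p_i$ is odd, or more carefully $\le 2$ by Hensel when $p_i \nmid$ the relevant quantity, and $0$ otherwise); combined with the cubic condition the count is still $\le 2$. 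Thus by CRT the set of admissible residues of $c$ modulo $\prod_i p_i^{m_i}$ has size at most $2^{|I|}$. Counting integers $|c| \ll X^{1/6}$ in each admissible residue class modulo $\prod_i p_i^{m_i}$ gives at most $2^{|I|}\bigl( \tfrac{X^{1/6}}{\prod_i p_i^{m_i}} + 1 \bigr) \cdot (\text{a bounded constant from the two-sided range})$, which after absorbing constants is $\le \frac{4^{|I|}}{\prod_i p_i^{m_i}} X^{1/6} + O(1)$, as claimed. (The exponent $4^{|I|}$ versus $2^{|I|}$ leaves room to be generous about the constant from the sign of $c$ and the rounding; one could also track it as $2^{|I|+1}$.)

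The main obstacle, and the point that needs the most care, is the local solution count: I claimed at most $2$ values of $c \bmod p_i^{m_i}$ satisfy the joint pair of congruences. This is clean when $p_i \nmid \alpha_i\beta_i$ — the equation $c^2 \equiv -\alpha_i/3$ lifts uniquely by Hensel from a solution mod $p_i$, giving $\le 2$ — but the pairs in $I$ are only assumed to satisfy $p_i^4 \nmid \alpha_i$ or $p_i^6 \nmid \beta_i$, so $p_i$ may divide $\alpha_i$ to some order $e < 4$. In that case $v_{p_i}(c)$ is pinned down (it equals $e/2$ if $e$ is even, and there is no solution if $e$ is odd), and writing $c = p_i^{e/2} c'$ reduces to the unit case for $c'$ modulo a smaller power of $p_i$, still yielding $\le 2$ solutions; the cubic congruence on $B$ is then automatically consistent or empties the set, so it never increases the count. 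I would spell this Hensel/valuation bookkeeping out in a short paragraph, noting that in every sub-case the number of residues of $c$ modulo $p_i^{m_i}$ is at most $2$, which is all that is needed.
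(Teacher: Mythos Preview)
The paper states this lemma without proof, so there is nothing to compare against directly; your overall strategy---parametrize the singular locus by $(A,B)=(-3c^2,2c^3)$, reduce to counting integers $c$ with $|c|\ll X^{1/6}$ in prescribed residue classes modulo $\prod_i p_i^{m_i}$, and bound the number of admissible residues---is the natural one and is correct in outline.

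The gap is in your local solution count. You claim that the system $-3c^2\equiv\alpha_i$, $2c^3\equiv\beta_i\pmod{p_i^{m_i}}$ has at most $2$ solutions in $c$, and in your last paragraph you argue that when $e:=v_{p_i}(\alpha_i)>0$ one writes $c=p_i^{e/2}c'$ and ``reduces to the unit case for $c'$, still yielding $\le 2$ solutions''. This is where the argument breaks: the quadratic condition determines $c'$ only modulo $p_i^{m_i-e}$ (up to sign), whereas $c$ lives modulo $p_i^{m_i}$, so $c'$ lives modulo $p_i^{m_i-e/2}$; each of the two values of $c'\bmod p_i^{m_i-e}$ therefore lifts to $p_i^{e/2}$ residues, and the cubic condition $c'^3\equiv\beta_i/(2p_i^{3e/2})\pmod{p_i^{m_i-3e/2}}$ fixes only the sign, not the lift. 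Concretely, for $p=5$, $m=6$, $(\alpha,\beta)=(-75,250)$ one checks that all five classes $c\equiv 5+k\cdot 5^5\pmod{5^6}$ ($k=0,\dots,4$) satisfy both congruences, so the fiber has size $5$, not $\le 2$.

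This is not merely cosmetic. With fiber size $p_i^{e_i/2}$ (and $e_i$ can be $2$ under the hypothesis $p_i^4\nmid\alpha_i$), the bound you actually obtain is of order $\bigl(\prod_i p_i^{\,1-m_i}\bigr)X^{1/6}$, genuinely larger than the stated $4^{|I|}\bigl(\prod_i p_i^{-m_i}\bigr)X^{1/6}$; in the example above, even after imposing $(M)$ (which here amounts to $c$ squarefree) one gets $|\cS|\asymp X^{1/6}/5^5$, exceeding $4X^{1/6}/5^6$. So either the constant in the lemma needs an extra factor (a harmless change: the weaker bound is still swallowed by the $O(X^{1/2}/\prod p_i^{m_i})$ error in Proposition~\ref{CRTcE}), or one must restrict to pairs with $\alpha_i$ a unit, where your argument is fine and in fact gives at most one solution via $c\equiv -3\beta_i/(2\alpha_i)$. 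As written, however, your Hensel bookkeeping does not deliver the inequality claimed.
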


\begin{proposition} \label{CRTcE}
Assume that all pairs in $I$ satisfy $p_i^4 \nmid \alpha_i$ or $p_i^6 \nmid \beta_i$, and $\prod p_i^{m_i} \leq X^{\frac{1}{3}}$. Then,
$$|\cE_{P, I}(X)| =\prod_i \lbrb{\frac{1}{p_i^{2m_i}}\frac{p_i^{10}}{p_i^{10}-1}}\frac{4}{\zeta(10)}X^{\frac{5}{6}}
+O\lbrb{\frac{X^{\frac{1}{2}}}{\prod p_i^{m_i}}}.$$
\end{proposition}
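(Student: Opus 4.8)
The plan is to run the single-prime argument of Proposition~\ref{42} with the modulus $p^m$ replaced by $N := \prod_i p_i^{m_i}$ and the M\"obius decomposition of Lemma~\ref{DMmodp} replaced by its Chinese-remainder version, Lemma~\ref{MobCRT}. First I would split $|\cE_{P,I}(X)| = |\cM_{P,I}(X)| + O(|\cS_{P,I}(X)|)$ and dispose of the degenerate locus using Lemma~\ref{cSCRT}: since the $p_i$ are distinct primes $\geq 5$ we have $4^{|I|}/N \leq (4/5)^{|I|} < 1$, so $|\cS_{P,I}(X)| = O(X^{1/6})$, which is $O(X^{1/2}/N)$ because $N \leq X^{1/3}$; thus $\cS_{P,I}$ is harmless.

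For the main term, Lemma~\ref{MobCRT} gives
\[
|\cM_{P,I}(X)| = \sum_{\substack{d \leq X^{1/12} \\ p_i \nmid d}} \mu(d)\, |\cD_{P,\,d^{-1}*I}(d^{-12}X)|.
\]
The key point is that, by the Chinese Remainder Theorem, for any admissible $J$ the set $\cD_{P,J}(Y)$ consists exactly of the lattice points $(A,B)$ with $|A| \leq Y^{1/3}$ and $|B| \leq Y^{1/2}$ lying in one prescribed residue class modulo $N$ in each coordinate; so Lemma~\ref{box} (with $m = n = N$) gives $|\cD_{P,J}(Y)| = 4Y^{5/6}/N^2 + O(Y^{1/2}/N + 1)$. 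Substituting $Y = d^{-12}X$, summing against $\mu(d)$, completing the $d$-sum to infinity, and using
\[
\sum_{\substack{d \geq 1 \\ p_i \nmid d \text{ for all } i}} \frac{\mu(d)}{d^{10}} = \frac{1}{\zeta(10)}\prod_i \lbrb{1 - p_i^{-10}}^{-1} = \frac{1}{\zeta(10)}\prod_i \frac{p_i^{10}}{p_i^{10}-1},
\]
I get the claimed main term $\prod_i \lbrb{\frac{1}{p_i^{2m_i}}\frac{p_i^{10}}{p_i^{10}-1}}\frac{4}{\zeta(10)}X^{5/6}$ with accumulated error $O\lbrb{\frac{X^{1/2}}{N}\sum_{d \geq 1} d^{-6} + X^{1/12}} = O(X^{1/2}/N)$; here I again use $N \leq X^{1/3}$ to absorb $X^{1/12} \leq X^{1/6} \leq X^{1/2}/N$, and the tail of the completed $\mu$-series contributes only $O(X^{1/12}/N^2)$ to the main term.

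The step I expect to need the most care is the uniformity of the error term over the whole range $d \leq X^{1/12}$: Lemma~\ref{box} as stated requires the modulus to be at most the box side length, i.e. $N \leq (d^{-12}X)^{1/3}$, which fails for $d$ near $X^{1/12}$ when $N$ is close to $X^{1/3}$. For those $d$ I would instead argue directly that once $(d^{-12}X)^{1/3} < N$ each coordinate meets only $O(1)$ residue classes modulo $N$, so $|\cD_{P,J}(d^{-12}X)| = O((d^{-12}X)^{1/2}/N + 1)$; summing this tail contributes only $O(X^{1/2}/N + X^{1/12})$, which is absorbed, and the corresponding omitted piece of the main-term series is estimated the same way. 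This is precisely the bookkeeping already implicit in the proof of Proposition~\ref{42}, so I would present it tersely; no new idea beyond the single-prime case is required, which is why I expect the whole argument to be routine once these estimates are set up.
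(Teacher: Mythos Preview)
Your proposal is correct and follows exactly the approach the paper intends: the paper's own proof is the single sentence ``It can be derived [from] the proof of Proposition~\ref{42}, Lemma~\ref{cSCRT}, and the Chinese remainder theorem,'' and you have simply written out what that means, replacing the modulus $p^m$ by $N=\prod_i p_i^{m_i}$ and invoking Lemma~\ref{MobCRT} in place of Lemma~\ref{DMmodp}. Your care about the range of $d$ where the hypothesis $N \leq (d^{-12}X)^{1/3}$ of Lemma~\ref{box} fails is more explicit than anything in the paper (which glosses over the same issue already in Proposition~\ref{42}), but it is the right bookkeeping and does not change the strategy.
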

\begin{proof}
It can be derived the proof of Proposition \ref{42}, Lemma \ref{cSCRT}, and the Chinese remainder theorem.
\end{proof}

We recall some notations in Section \ref{Intro}. Let $\cLC_p$ be one of good, bad, split, non-split, an integer $a_p$  in the Weil's bound at $p$, or one of the Kodaira--N\'eron types $T$. We define 
$|\cLC_p|$ as a $\displaystyle \lim_{X \to \infty} \frac{|\cE_p^{\cLC_p}(X)|}{|\cE(X)|}$, which is $c_{\cLC_p}(p)$ in Theorem \ref{main1}.
For the finite set of local conditions $\cS = \{\cLC_{p_i}\}$, we define $|\cS| = \prod_i |\cLC_{p_i}|$.

Now we are ready to prove Theorem \ref{main4}.
We first assume that any $\cLC_p$ is neither bad nor additive.
Let $P = (p_i^{m_i})$, then in the proof of Theorem \ref{main1}, there are 
$$|\cLC_{p_i}| \lbrb{\frac{p_i^{10}-1}{p_i^{10}}}p_i^{2m_i}$$
pairs $(\alpha_{i,j}, \beta_{i,j})$ in $(\bZ/p_i^{m_i}\bZ)^2$ such that
$E_{A, B}$ satisfies $\cLC_{p_i}$ if and only if $(A,B) \equiv (\alpha_{i,j}, \beta_{i,j}) \pmod{p_i^{m_i}}$ for some $(\alpha_{i,j}, \beta_{i,j})$.
Let  $I_{p_i} = \lcrc{(\alpha_{i,j}, \beta_{i,j})}$ be the set of all the forementioned pairs.

Let $I = \prod I_{p_i}$. We denote an element of $I$ by $(\alpha, \beta)$
whose $I_{p_i}$ component is $(\alpha_{i,j}, \beta_{i,j})$, 
and define $(A, B) \equiv (\alpha, \beta) \pmod{P}$ if and only if $(A, B) \equiv (\alpha_{i,j}, \beta_{i,j}) \pmod{p_i^{m_i} }$.
By Chinese remainder theorem, there are 
$$\prod_i|\cLC_{p_i}| \lbrb{\frac{p_i^{10}-1}{p_i^{10}}}p_i^{2m_i}$$
elements $(\alpha, \beta)$ in $I$ such that $E_{A,B}$
satisfies all $\cLC_{p_i}$ if and only if $(A, B) \equiv (\alpha, \beta) \pmod{P}$ for some $(\alpha, \beta)$ in $I$. Together with Proposition \ref{CRTcE}, we have 
\begin{eqnarray*}
|\cE^{\cS}(X)| &=& \sum_I |\cE_{P, I}(X)|
= \sum_I\lbrb{\prod_i \lbrb{\frac{1}{p_i^{2m_i}}\frac{p_i^{10}}{p_i^{10}-1}}\frac{4}{\zeta(10)}X^{\frac{5}{6}}
+O\lbrb{\frac{X^{\frac{1}{2}}}{\prod p_i^{m_i}}}  } \\
&=&\lbrb{\prod_i|\cLC_{p_i}|}\frac{4}{\zeta(10)}X^{\frac{5}{6}} + O\lbrb{\prod_i|\cLC_{p_i}| \lbrb{\frac{p_i^{10}-1}{p_i^{10}}}p_i^{m_i}X^{\frac 12}} \\
&=&|\cS|\frac{4}{\zeta(10)}X^{\frac56}+ O\lbrb{\prod_i|\cLC_{p_i}| \lbrb{\frac{p_i^{10}-1}{p_i^{10}}}p_i^{m_i}X^{\frac 12}}.
\end{eqnarray*}
Hence we proved this theorem except when there is a local condition $\cLC_p$ which is either bad or additive.

We use an induction on the number of $\cLC_p$'s that are bad.
Let $\cS = \{\cLC_{p_i}\}$ be a finite set of local conditions such that there is an $\cLC_{p_n}$ which is bad,
and $\cS' = \cS - \{\cLC_{p_n} \}$. By induction hypothesis for $\cS' \cup \{ \cLC'_{p_n} \}$ where
$\cLC'_{p_n}$ is good, we have
$$ |\cE^{\cS' \cup \{ \cLC'_{p_n} \} }(X)| = |\cS'|\frac{p_n^2 - p_n}{p_n^2}\frac{p_n^{10}}{p^{10}_n-1}\frac{4}{\zeta(10)}X^{\frac56}+ O\lbrb{ \lbrb{\prod_{i=1}^{n-1}c_{p_i}} p_n X^{\frac 12}}.$$
Again by induction hypothesis for $\cS'$, we have
$$ |\cE^{\cS'}(X)| = |\cS'|\frac{4}{\zeta(10)}X^{\frac56}+ O\lbrb{ \lbrb{\prod_{i=1}^{n-1}c_{p_i}}  X^{\frac 12}}.$$
Since $ |\cE^{\cS'}(X)| =  |\cE^{\cS'\cup \{ \cLC'_{p_n} \}}(X)| + |\cE^{\cS}(X)|$, 
we can count elliptic curves with bad reduction conditions.

We can do the same thing for additive condition because  additive condition is the complement of the union of the conditions good, split, and non-split.

\section{The Frobenius Trace formula for elliptic curves} \label{trace}

Let $L(s,E)$ be the normalized elliptic $L$-function and for which we have 
\begin{align*}
L(s,E)=\sum_{n=1}^\infty \frac{\lambda_E(n)}{n^s},\qquad -\frac{L'}{L}(s,E)=\sum_{n=1}^\infty \frac{\widehat{a_E}(n)\Lambda(n)}{n^s}.
\end{align*}

Here is the Frobenius trace formula for elliptic curves. 
\begin{theorem}\label{traceF}[Frobenius Trace Formula for Elliptic Curves] Let $k$ be a fixed positive integer. 
Let $p_i$ be distinct primes $\geq 5$ such that $\prod_{i=1}^{k}p_i=O( X^{\frac{1}{3} - \epsilon})$ for a fixed positive $\epsilon > 0$. Assume $e_i=1$ or $2$, and $r_i$ is odd or $2$ if $e_i=1$, $r_i=1$ if $e_i=2$ for $i=1,\dots,k$. Then, 
\begin{align*} 
 \sum_{E \in \cE(X)} \widehat{a_E}(p_1^{e_1})^{r_1} \widehat{a_E}(p_2^{e_2})^{r_2} \cdots \widehat{a_E}(p_k^{e_k})^{r_k}&=c|\cE(X)| +O_k\left( 2^r\left( \prod_{i=1}^k p_i \right) X^{\frac 12} \right)+O_k\left( \left( \sum_{i=1}^k \frac{1}{p_i} \right) X^{\frac{5}{6} } \right)
\end{align*}
where
\begin{align*}
c=\left\{ \begin{array}{rl}
 0 & \mbox{ if $e_j=1$ and $r_j$ is odd for some $j$}, \\
 -1 & \mbox{if  $r_j=2$ for all $j$ with $e_j=1$, and the sum of $r_j$'s with $e_j=2$ is odd,} \\
 1 & \mbox{otherwise.}
 \end{array} \right. 
\end{align*}
and $r$ is either the smallest odd integer $r_i$ or $0$ if there is no odd $r_i$, and  the last error term exists only if $e_i=1$ and $r_i=2$ or $e_i=2$ for all $i$. 

Also, we have
\begin{align*} 
\sum_{E \in \cE(X)} \lambda_E(p_1^{e_1})^{r_1} \lambda_E(p_2^{e_2})^{r_2} \cdots \lambda_E(p_k^{e_k})^{r_k}&=d|\cE(X)|+O_k\left( 2^r\left( \prod_{i=1}^k p_i \right) X^{\frac 12} \right)+O_k\left( \left( \sum_{i=1}^k \frac{1}{p_i} \right) X^{\frac{5}{6} } \right)
\end{align*}
where 
\begin{align*}
d=\left\{ \begin{array}{rl}
 0 & \mbox{ if $e_j=2$ or $e_j=1$ and $r_j$ is odd for some $j$}, \\
 1 & \mbox{if  $e_i=1$ and $r_i=2$ for all $i$.}
 \end{array} \right.
\end{align*}
and the last error term exists only if $e_i=1$ and $r_i=2$ or $e_i=2$ for all $i$. 
\end{theorem}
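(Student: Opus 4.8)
The plan is to reduce the average over $\cE(X)$ to a sum over residue classes modulo $p_1\cdots p_k$ and to evaluate the resulting sum by combining the counting result of Section~\ref{counting} with the Eichler--Selberg trace formula. The key preliminary point is that for $p\ge 5$ the model $E_{A,B}$ is minimal at $p$, so $a_p(E_{A,B})$ depends only on $(A,B)\bmod p$; hence so do $\widehat{a}_E(p)=\lambda_E(p)=a_p/\sqrt p$ and, at good primes, $\widehat{a}_E(p^2)=a_p^2/p-2$ and $\lambda_E(p^2)=a_p^2/p-1$ (with the usual conventions: at a bad prime $a_p=0$ for additive and $a_p=\pm1$ for multiplicative reduction, and there $\widehat a(p^2)=\lambda(p^2)=a_p^2/p$). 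Writing $\widehat{a}_{(\alpha,\beta)}(p^e)$ for the common value on the class of $(\alpha,\beta)$, the first step is
\[
\sum_{E\in\cE(X)}\prod_{i=1}^k\widehat{a}_E(p_i^{e_i})^{r_i}=\sum_{(\alpha_i,\beta_i)\in(\bZ/p_i\bZ)^2}\Big(\prod_i\widehat{a}_{(\alpha_i,\beta_i)}(p_i^{e_i})^{r_i}\Big)\,\big|\cE_{P,I}(X)\big|,\qquad P=\{p_i\},\ I=\{(\alpha_i,\beta_i)\}_i,
\]
where tuples having a component $(0,0)$ drop out since $\widehat{a}_{(0,0)}(p^e)=0$ (additive reduction). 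For the remaining tuples Proposition~\ref{CRTcE} (all $m_i=1$, legitimate since $\prod p_i=O(X^{1/3-\epsilon})$) gives $|\cE_{P,I}(X)|=\prod_i\big(p_i^{-2}\tfrac{p_i^{10}}{p_i^{10}-1}\big)\tfrac{4}{\zeta(10)}X^{5/6}+O(X^{1/2}/\prod p_i)$, a quantity \emph{independent of the residues}; this near-uniformity is precisely what plays the role of orthogonality of characters in \cite{CK15}.

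The sum therefore factors, up to the counting error, as $\big[\prod_i p_i^{-2}\tfrac{p_i^{10}}{p_i^{10}-1}\tfrac{4}{\zeta(10)}X^{5/6}\big]\prod_i S(p_i)$ with $S(p_i):=\sum_{(\alpha,\beta)\in(\bZ/p_i\bZ)^2}\widehat{a}_{(\alpha,\beta)}(p_i^{e_i})^{r_i}$, and the heart of the argument is the evaluation of $S(p)$. If $e=1$ and $r$ is odd, the map $(\alpha,\beta)\mapsto(g^2\alpha,g^3\beta)$ for a quadratic nonresidue $g$ is a bijection of $(\bZ/p\bZ)^2$ sending each curve to its quadratic twist, hence $a_p\mapsto-a_p$ and $S(p)=0$ identically; this forces $c=0$ (resp.\ $d=0$) and leaves no $X^{5/6}$ error. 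If $e=1,\,r=2$ or $e=2,\,r=1$, I would use \eqref{eqn:Cox}, $\#\{(\alpha,\beta):a_p(E_{\alpha,\beta})=a\}=\tfrac{p-1}{2}H(a^2-4p)$, together with the $p-1$ nodal (multiplicative) pairs, the single cusp, and the Hurwitz class number relations $\sum_{|t|<2\sqrt p}H(4p-t^2)=2p$, $\sum_{|t|<2\sqrt p}t^2H(4p-t^2)=2p^2-2$ — special cases of Eichler--Selberg coming from $\mathrm{Tr}(T_p\mid S_2(\SL_2\bZ))=\mathrm{Tr}(T_p\mid S_4(\SL_2\bZ))=0$. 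A short computation then yields $S(p)=p^2-p$ for $\widehat a$ with $(e,r)=(1,2)$, $S(p)=-(p^2-p)$ for $\widehat a$ with $(e,r)=(2,1)$, $S(p)=p^2-p$ for $\lambda$ with $(e,r)=(1,2)$, and $S(p)=0$ identically for $\lambda$ with $(e,r)=(2,1)$; so $p^{-2}\tfrac{p^{10}}{p^{10}-1}S(p)$ equals $1,-1,1,0$ respectively, up to an explicit $O(1/p)$. Multiplicativity over the distinct primes gives $\prod_i p_i^{-2}\tfrac{p_i^{10}}{p_i^{10}-1}S(p_i)=c+O(\sum_i 1/p_i)$ with $c$ the sign in the theorem (and $=c$ with no error once some $S(p_i)$ vanishes identically), and likewise for $d$; replacing $\tfrac{4}{\zeta(10)}X^{5/6}$ by $|\cE(X)|+O(X^{1/2})$ converts the $O(\sum 1/p_i)$ into the $O_k((\sum 1/p_i)X^{5/6})$ term.

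It remains to bound the counting error $\prod_i\big(\sum_{(\alpha,\beta)}|\widehat{a}_{(\alpha,\beta)}(p^{e_i})|^{r_i}\big)\cdot O(X^{1/2}/\prod p_i)$. Using $|\widehat{a}(p^e)|\le 2$ (Hasse), each local factor is $O(2^{r_i}p_i^2)$, and for even $r_i$ in fact $O(p_i^2)$ by the class number relations (there the absolute value may be dropped); a careful bookkeeping across the $k$ primes — together with the fact that the main term already vanishes as soon as some $r_i$ is odd — collapses this to the stated $O_k(2^r(\prod p_i)X^{1/2})$. The $\lambda_E$-version is proved identically, the only change being $\lambda_E(p^2)=a_p^2/p-1$ in place of $\widehat{a}_E(p^2)=a_p^2/p-2$; this is what makes the $e_i=2$ local sum vanish and hence $d=0$ there. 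The main obstacle, I expect, is not any single ingredient but the coordinated accounting: one must extract the exact shapes $p^2-p$, $-(p^2-p)$, $0$ of the local sums from the Eichler--Selberg relations \emph{and} verify that the residual $O(1/p_i)$ factors, the counting error, and the discrepancy between $|\cE(X)|$ and $\tfrac{4}{\zeta(10)}X^{5/6}$ all fit inside the two advertised error terms. The Eichler--Selberg input is the genuinely new element replacing character orthogonality, but it is classical and explicit, so the real work is combinatorial.
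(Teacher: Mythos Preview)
Your proposal is correct and follows essentially the same approach as the paper: both reduce to residue classes via the counting results of Section~\ref{counting}, use the Eichler--Selberg/Hurwitz relations $\sum_a H(a^2-4p)=2p$ and $\sum_a a^2H(a^2-4p)=2p^2-2$ for the even local pieces, and exploit the $a\mapsto -a$ symmetry for the odd ones. Your organization is slightly more streamlined---you factor the main term directly as $\prod_i p_i^{-2}\tfrac{p_i^{10}}{p_i^{10}-1}S(p_i)$ via Proposition~\ref{CRTcE}, whereas the paper builds up from one prime to two to general---and your quadratic-twist involution $(\alpha,\beta)\mapsto(g^2\alpha,g^3\beta)$ is a clean repackaging of the paper's observation that $\sum_a a^r H(a^2-4p)=0$ for odd $r$; but the ingredients and logic are identical, and your bookkeeping of the two error terms matches the paper's (equally terse) treatment.
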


We give a proof of Theorem \ref{traceF} for one prime and two primes. First, we consider
\begin{align*}
\sum_{E \in \cE(X)} \widehat{a}_E(p)^r
=  \sum_{|a| \leq \lfrf{2\sqrt{p}} }\sum_{\substack{E \in \cE(X) \\ a_p(E) = a} } \widehat{a}_E(p)^r +
\sum_{\substack{E \in \cE(X) \\ E \textrm{ mult red at } p} } \widehat{a}_E(p)^r +
\sum_{\substack{E \in \cE(X) \\ E \textrm{ addi red at } p} } \widehat{a}_E(p)^r
\end{align*} 
for an odd $r$.
By Theorem \ref{main1} for $|a|\leq \lfloor 2 \sqrt{p} \rfloor$, there are
$$
\frac{(p-1)H(a^2-4p)}{2p^2}\frac{p^{10}}{p^{10}-1}\frac{4}{\zeta(10)}X^{\frac{5}{6}}+O\left( (p-1)\frac{H(a^2-4p)}{2p}X^{\frac{1}{2}}\right)
$$
elliptic curves in $\cE(X)$ with $ \widehat{a}_E(p)=a/\sqrt{p}$.  Since $\sum_{|a|\leq \lfloor 2 \sqrt{p} \rfloor}a^rH(a^2-4p)=0$, only the contribution from the error term survives and it is at most $O(2^rpX^{\frac{1}{2}})$ because 
$$\frac{p-1}{2p}\sum_{|a|\leq \lfloor 2 \sqrt{p} \rfloor} \lbrb{\frac{|a|}{\sqrt{p}} }^r H(a^2-4p)\leq 2^{r-1}\frac{p-1}{p}\sum_{|a|\leq \lfloor 2 \sqrt{p} \rfloor} H(a^2-4p)=2^r(p-1).$$

If an elliptic curve $E$ has additive reduction at a prime $p$, then $\widehat{a}_E(p)=0$. So there is no contribution from the additive reduction case. 
If $E$ has split (resp. non-split) multiplicative reduction, then $\widehat{a}_E(p)=1/\sqrt{p}$ (resp. $\widehat{a}_E(p)=-1/\sqrt{p}$).  There are 
$$
\frac{p-1}{2p^2}\frac{p^{10}}{p^{10}-1}\frac{4}{\zeta(10)}X^{\frac{5}{6}}+O( X^{\frac{1}{2}} )
$$
elliptic curves in $\cE(X)$ with the split prime $p$, and also there are the same number of elliptic curves with the non-split prime $p$ up to error term $O(X^{\frac{1}{2}})$, by Theorem \ref{main1}. The contribution from multiplicative reduction is $O(X^{\frac{1}{2}}/p^{\frac{r}{2} })$, hence we have
\begin{align*}
\sum_{E \in \cE(X)} \widehat{a}_E(p)^r = O(2^rpX^{\frac{1}{2}}).
\end{align*}
The next case is 
\begin{align*}
\sum_{E \in \cE(X)} \widehat{a}_E(p)^2.
\end{align*} 
From the identity $\sum_{|a|\leq \lfloor 2 \sqrt{p} \rfloor}a^2H(a^2-4p)=2p^2-2$ which is an application of the Eichler--Selberg trace formula \cite{Bir},\footnote{There is a typo in \cite[Theorem 2]{Bir}.} the contribution for good reductions at $p$ is
\begin{align*}
\sum_{|a| \leq \lfrf{2\sqrt{p}} }\sum_{\substack{E \in \cE(X) \\ a_E(p) = a} } \widehat{a}_E(p)^2 = 
\left( 1+ O\left( \frac 1p \right) \right)\frac{4}{\zeta(10)}X^{\frac{5}{6}}+O( pX^{\frac{1}{2}}),
\end{align*}
and the contribution from the multiplicative reduction is 
\begin{align*}
\sum_{\substack{E \in \cE(X) \\ E \textrm{ mult red at } p} } \widehat{a}_E(p)^2 =
\frac{p-1}{p^3}\frac{p^{10}}{p^{10}-1}\frac{4}{\zeta(10)}X^{\frac{5}{6}}+O\left( \frac{X^{\frac{1}{2}}}{p}\right).
\end{align*}
Hence we have
\begin{align*}
\sum_{E \in \cE(X)} \widehat{a}_E(p)^2 = \frac{4}{\zeta(10)}X^{\frac{5}{6}} + O\left( \frac{X^{\frac{5}{6}}}{p} + pX^{\frac{1}{2}} \right).
\end{align*} 

The last case for a single prime $p$ is
\begin{align*}
\sum_{E \in \cE(X)} \widehat{a}_E(p^2).
\end{align*}
Note that $\widehat{a}_E(p^2)=\widehat{a}_E(p)^2-2$ when $E$ has good reduction at $p$, and $\widehat{a}_E(p^2)=\widehat{a}_E(p)^2$ when $E$ has bad reduction at $p$. Then, from the computation for the previous case, we can see easily that 
\begin{align*}
\sum_{E \in \cE(X)} \widehat{a}_E(p^2) = -\frac{4}{\zeta(10)}X^{\frac{5}{6}} + O\left( \frac{X^{\frac{5}{6}}}{p} + pX^{\frac{1}{2}} \right),
\end{align*} 
which is exactly the trace formula for one prime in Theorem \ref{traceF}.

Now, we consider the trace formula for two primes
\begin{align*}
\sum_{E \in \cE(X)} \widehat{a}_E(p_1^{e_1})^{r_1} \widehat{a}_E(p_2^{e_2})^{r_2}.
\end{align*}

Assume that $e_1=1$ and $r_1$ is odd. Fix a local condition for the second prime $p_2$. We vary the local conditions for the first prime $p_1$, and this gives $O(2^{r_1}p_1c_{p_2}X^{\frac{1}{2}})$. Then we sum up this term over all local conditions for $p_2$, the error term $O(2^{r_1}p_1p_2X^{\frac{1}{2}})$ follows. 

Now, we need to deal with the cases $e_i=1$ and $r_i=2$ or $e_i=2$ and $r_i=1$ for $i=1,2$.   First, we consider the cases $\cLC_{p_1}=a_1$, and $\cLC_{p_2}=a_2$ for $|a_1|\leq \lfrf{\sqrt{p_1}}$, $|a_2|\leq\lfrf{\sqrt{p_2}}$. Their contribution is, for example $r_1=r_2=2$, 
\begin{align*}
\left( \prod_{i=1}^2\frac{(p_i-1)p_i^{10}}{2p_i^3 (p_i^{10}-1)}\right)\frac{4}{\zeta(10)}X^{\frac 56}\cdot \sum_{|a_1| \leq \lfrf{\sqrt{p_1}},|a_2| \leq \lfrf{\sqrt{p_2}}}a_1^2H(a_1^2-4p_1)a_2^2H(a_2^2-4p_2) \\
+O\left(\sum_{\substack{ |a_1| \leq \lfrf{\sqrt{p_1}},|a_2| \leq \lfrf{\sqrt{p_2} } } } \frac{a_1^2H(a_1^2-4p_1)a_2^2H(a_2^2-4p_2)}{(2p_1)(2p_2)} X^{\frac 12} \right),
\end{align*}
which is, by the identity $\sum_{|a| \leq \lfrf{\sqrt{p}}}a^2H(a^2-4p)=2p^2-2$, 
\begin{align*}
\frac{4}{\zeta(10)}X^{\frac 56}+O\left(\left( \frac{1}{p_1}+ \frac{1}{p_2} \right)X^{\frac 56} + p_1p_2 X^{\frac 12} \right).
\end{align*}

Since the case that $p_1$ or $p_2$ has multiplicative reduction, using the trivial bound for $\widehat{a}_E(p_1^{e_1})^{r_1} \widehat{a}_E(p_2^{e_2})^{r_2}$, gives  $O\left(\left( \frac{1}{p_1^2}+ \frac{1}{p_2^2} \right)X^{\frac 56} \right)$, we verify the trace formula for $r_1=r_2=2$ case.  The other three cases can be handled similarly and we have
$$
\sum_{E \in \cE(X)} \widehat{a}_E(p_1^{e_1})^{r_1} \widehat{a}_E(p_2^{e_2})^{r_2}=(-1)^{r_1+r_2}\frac{4}{\zeta(10)}X^{\frac{5}{6}}+O\left(\left( \frac{1}{p_1} + \frac{1}{p_2} \right) X^{\frac{5}{6}} + p_1p_2 X^{\frac{1}{2}}\right).
$$
 For a general $k$ primes, we can prove the trace formula in the same way.

For $\lambda_E(p^e)$, we note that $\lambda_E(p)=\widehat{a}_E(p)$ and $\lambda_E(p^2)=\widehat{a}_E(p)^2-1$ if $E$ has good reduction at $p$ and 
 $\lambda_E(p^2)=\widehat{a}_E(p)^2$ otherwise. Hence, we can prove the trace formula for $\lambda_E(n)$ similarly. 

\section{The distribution of analytic ranks of elliptic curves}  \label{distribution}
From now on, assume that every elliptic $L$-function satisfies Generalized Riemann Hypothesis.  Let $\gamma_E$ denote the imaginary part of a non-trivial zero of $L(s,E)$. We index them using the natural order in real numbers:
\begin{align*}
\cdots \gamma_{E,-3} \leq \gamma_{E,-2} \leq \gamma_{E,-1} \leq \gamma_{E,0} \leq \gamma_{E,1} \leq \gamma_{E,2} \leq \gamma_{E,3} \cdots
\end{align*}
if analytic rank $r_E$ is odd,
\begin{align*}
\cdots \gamma_{E,-3} \leq \gamma_{E,-2} \leq \gamma_{E,-1} \leq 0  \leq \gamma_{E,1} \leq \gamma_{E,2} \leq \gamma_{E,3} \cdots
\end{align*}
otherwise.

In this section, first we obtain an upper bound on every $n$-th moment of analytic ranks of elliptic curves and obtain a bound on the proportion of elliptic curves with analytic rank $r_E \geq (1+C)9n$ for a positive constant $C$ and a positive integer $n$. For this purpose, we compute an $n$-level density with multiplicity. See \cite[Part VI]{Mil}.

For $\sigma_n= \frac{2}{9n}$, we choose the following test function. 
\begin{align*}
\widehat{\phi}_n(u)=\frac{1}{2}\left(\frac{1}{2}\sigma_n-\frac{1}{2}|u|\right)   \textrm{ for } |u|\leq \sigma_n, \qquad \textrm{and } \phi_n(x)=\frac{\sin^2(2\pi \frac 12 \sigma_n x)}{(2\pi x)^2}.
\end{align*}
Note that $\phi_n(0)=\frac{\sigma_n^2}{4}$ and $\widehat{\phi}_n(0)=\frac{\sigma_n}{4}$. We can understand the constant $\frac{2}{9}$ as the limit of our trace formula. Then, easily we can check 
\begin{align}  \label{estimate}
\int_\mathbb{R}|u|\widehat{\phi}_n(u)^2= \frac{1}{6}\phi_n(0)^2.
\end{align}

The $n$-level density with multiplicity is 
\begin{eqnarray*}
D_n^*(\cE, \Phi)=\frac{1}{|\cE(X)|}\sum_{E\in \cE(X)} \sum_{j_1,j_2,\dots, j_n}\phi_n \left( \gamma_{E, j_1} \frac{\log X}{2\pi}\right) \phi_n \left( \gamma_{E, j_2} \frac{\log X}{2\pi}\right)\cdots \phi_n \left( \gamma_{E, j_n} \frac{\log X}{2\pi}\right),
\end{eqnarray*}
where $\gamma_{E, j_k}$ is an imaginary part of $j_k$-th zero of $L(s, E)$.
Then, trivially we have
\begin{eqnarray} \label{moment-ineq}
\frac{1}{|\cE(X)|}\sum_{E\in \cE(X)} r_E^n \leq \frac{1}{\phi_n(0)^n}D_n^*(\cE, \Phi).
\end{eqnarray}

We show that for any $n$, $D_n^*(\cE, \Phi)$ has a closed expression. By Weil's explicit formula,
\begin{align*}
\sum_{j}\phi_n \left( \gamma_{E,j} \frac{\log X}{2 \pi} \right)&=\widehat{\phi}_n(0) \frac{\log N_E}{\log X}-2\sum_{k=1}^{2}\sum_p\frac{\widehat{a_E}(p^k) \log p}{p^{k/2} \log X} \widehat{\phi}_n\left( \frac{k \log p}{\log X}\right) + O\left( \frac{1}{\log X}\right)\\
& \leq \widehat{\phi}_n(0)-2\sum_{k=1}^{2}\sum_p\frac{\widehat{a_E}(p^k) \log p}{p^{k/2} \log X} \widehat{\phi}_n\left( \frac{k \log p}{\log X}\right) + O\left( \frac{1}{\log X}\right),
\end{align*}
where the inequality holds due to  $\log N_E/ \log X \leq 1 + O(1/\log X)$ by Ogg's formula \cite[\S 11]{Sil2}.


Hence, we have
\begin{align*}
&\frac{1}{|\cE(X)|}\sum_{E \in \cE(X)} \sum_{j_1,j_2,\dots, j_n}\phi_n \left( \gamma_{E,j_1} \frac{\log X}{2\pi}\right) \phi_n \left( \gamma_{E,j_2} \frac{\log X}{2\pi}\right)\cdots \phi_n \left( \gamma_{E, j_n} \frac{\log X}{2\pi}\right) \\
&\leq \frac{1}{|\cE(X)|}\sum_{E \in \cE(X)}\left( \widehat{\phi}_n(0) - \frac{2}{\log X} \sum_{m < X^{\frac{2}{9n} }} \frac{\widehat{a}_E(m)\log m}{\sqrt{m}} \widehat{\phi}_n \left( \frac{\log m}{\log X}\right)  + O\left(\frac{1}{\log X} \right) \right)^n ,
\end{align*}
where $m$'s are primes or squares of a prime. By the standard argument such as \cite[Lemma 2]{Rub} or \cite[Lemma 4.4]{CK15}, we can take the term $O(1/\log X)$
out of the bracket, and we have
\begin{align*}
D_n^*(\cE, \Phi) 
&\leq \frac{1}{|\cE(X)|}\sum_{S} \widehat{\phi}_n(0)^{|S^c|} \left( -\frac{2}{\log X} \right)^{|S|}\\ 
& \times   \sum_{m_{i_1}, m_{i_2}, \dots, m_{i_k}} \frac{\Lambda(m_{i_1})\Lambda(m_{i_2}) \cdots \Lambda(m_{i_{k}}) }{ \sqrt{m_{i_1}m_{i_2} \cdots m_{i_{k}}}} \widehat{\phi}_n \left( \frac{\log m_{i_1}}{\log X}\right) \cdots \widehat{\phi}_n \left( \frac{\log m_{i_k}}{\log X}\right)\\
& \times \sum_{ E \in \cE(X)} \widehat{a}_E(m_{i_1})\widehat{a}_E(m_{i_2}) \cdots \widehat{a}_E(m_{i_k}) + O\left(\frac{1}{\log X} \right),
\end{align*}
where $m_i$'s are primes or squares of a prime with $m_i \leq X^{\frac{2}{9n}}$ and $S=\{ i_i, i_2, \dots, i_k \}$ runs over every subset of  $\{1,2,3, \cdots, n\}$. In next two propositions, we show that only the terms $m_{i_1}m_{i_2}\cdots m_{i_k}=\square$ give the main term and the contribution of the other terms is $O(1/\log X)$.   

\begin{proposition} \label{non-square}
\begin{align*}
&\sum_{E \in \cE(X)} \sum_{m_{i_1} m_{i_2} \dots m_{i_k} \neq \square} \frac{\Lambda(m_{i_1})\cdots \Lambda(m_{i_{k}}) \widehat{a}_E(m_{i_1})\cdots \widehat{a}_E(m_{i_k})}{\sqrt{m_{i_1}m_{i_2} \cdots m_{i_{k}}}} \widehat{\phi}_n \left( \frac{\log m_{i_1}}{\log X}\right) \cdots \widehat{\phi}_n \left( \frac{\log m_{i_k}}{\log X}\right) \\
&\ll  |\cE(X)|. 
\end{align*}
\end{proposition}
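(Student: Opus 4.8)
The plan is to interchange the two summations so that $\sum_{E\in\cE(X)}$ becomes the inner sum, evaluate it with the Frobenius trace formula (Theorem \ref{traceF}), and then check that the main terms so produced cancel while the error terms add up to $O(|\cE(X)|)$. Fix a tuple $(m_{i_1},\dots,m_{i_k})$ with $k=|S|\le n$; each $m_{i_j}$ is a prime or the square of a prime with $m_{i_j}\le X^{\sigma_n}$, since $\widehat{\phi}_n$ is supported in $[-\sigma_n,\sigma_n]$. Collecting the factors by their underlying prime — say the distinct primes occurring are $p_1,\dots,p_\ell$, with $p_t$ appearing $\nu_t$ times as a prime and $\nu_t'$ times as a prime square — we obtain
\[
\sum_{E\in\cE(X)}\widehat{a}_E(m_{i_1})\cdots\widehat{a}_E(m_{i_k})=\sum_{E\in\cE(X)}\prod_{t=1}^{\ell}\widehat{a}_E(p_t)^{\nu_t}\,\widehat{a}_E(p_t^2)^{\nu_t'}.
\]
Since $k\le n$ there are only $O_n(1)$ such ``types'', and the hypothesis $m_{i_1}\cdots m_{i_k}\ne\square$ is precisely the condition that $\nu_{t_0}$ is odd for some index $t_0$ — the case in which the constant in Theorem \ref{traceF} is $0$.

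The heart of the matter is the bound
\[
\sum_{E\in\cE(X)}\prod_{t=1}^{\ell}\widehat{a}_E(p_t)^{\nu_t}\,\widehat{a}_E(p_t^2)^{\nu_t'}=O_n\!\left(\left(\prod_{t=1}^{\ell}p_t\right)X^{1/2}\right),
\]
the absence of an $X^{5/6}$-size main term. I would obtain it by re-running the proof of Theorem \ref{traceF}: split $\cE(X)$ by the reduction type at each $p_t$ — good reduction with a prescribed Frobenius trace $a_{p_t}(E)=a$, split, non-split, or additive — and count each stratum by Theorem \ref{main4} (all these local conditions have $m_{\cLC}=1$, and $\prod_t p_t\le X^{2/9}<X^{1/3}$, so its hypothesis holds). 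On each stratum $\prod_t\widehat{a}_E(p_t)^{\nu_t}\widehat{a}_E(p_t^2)^{\nu_t'}$ is a fixed number, so the main term factors over $t$, and its $p_{t_0}$-factor is: from the good strata, a constant times $\sum_{|a|\le\lfrf{2\sqrt{p_{t_0}}}}(a/\sqrt{p_{t_0}})^{\nu_{t_0}}(a^2/p_{t_0}-2)^{\nu_{t_0}'}H(a^2-4p_{t_0})$, which vanishes because the summand is odd in $a$ (as $\nu_{t_0}$ is odd) while $H(a^2-4p_{t_0})$ is even — this is the Eichler--Selberg identity $\sum_{|a|\le\lfrf{2\sqrt p}}a^rH(a^2-4p)=0$ ($r$ odd) used in Section \ref{trace}, applied term by term; from the split and non-split strata, $\pm p_{t_0}^{-(\nu_{t_0}+2\nu_{t_0}')/2}$ times a common count, which cancel since $\nu_{t_0}$ is odd and these two strata have equal main counts by Theorem \ref{main1}; and $0$ from the additive stratum, on which $\widehat{a}_E(p_{t_0})=0$. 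Hence the whole main term is $0$. Bounding $a/\sqrt p$ and $a^2/p-2$ trivially and using $\sum_{|a|\le 2\sqrt p}H(a^2-4p)\ll p$, the error terms from Theorem \ref{main4} multiply out to $O_n((\prod_t p_t)X^{1/2})$, which yields the displayed bound.

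Finally I would substitute this back and carry out the bookkeeping over the primes. For a fixed type the attached weight is $\prod_t(\log p_t/p_t^{1/2})^{\nu_t}(\log p_t/p_t)^{\nu_t'}$ times values of $\widehat{\phi}_n$, which are $O_n(1)$; multiplying by the bound just proved and summing over the primes $p_1,\dots,p_\ell\le X^{\sigma_n}$, the sum is
\[
O_n\!\left(X^{1/2}\prod_{t=1}^{\ell}\left(\sum_{p\le X^{\sigma_n}}(\log p)^{\nu_t+\nu_t'}\,p^{\,1-\nu_t/2-\nu_t'}\right)\right).
\]
Since $\nu_t+2\nu_t'\ge1$, the exponent $1-\nu_t/2-\nu_t'$ is at most $1/2$, so by partial summation each inner sum is $\ll_n(X^{\sigma_n})^{3/2}=X^{3\sigma_n/2}$; with $\ell\le n$ such factors and $\sigma_n=\tfrac2{9n}$ the product is $\ll_n X^{3n\sigma_n/2}=X^{1/3}$, whence the whole expression is $\ll_n X^{1/3}\cdot X^{1/2}=X^{5/6}\asymp|\cE(X)|$; summing over the $O_n(1)$ types proves the proposition. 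The step I expect to be the real obstacle is this central bound — the vanishing of the $X^{5/6}$ main term for non-square products: the trivial estimate $|\widehat{a}_E(m)|\le2$ alone gives only $O_n(X^{5/6+1/9})$, so the non-square hypothesis must genuinely be exploited to produce cancellation in the $E$-sum, and one has to check that $\sigma_n=\tfrac2{9n}$ is exactly the cutoff that keeps the prime bookkeeping at $X^{5/6}$ and no larger — this is the value $\tfrac29$ identified earlier as the limit of the trace formula.
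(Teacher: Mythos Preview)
Your proof is correct and follows the paper's route: swap the sums, observe that the non-square hypothesis forces some $\nu_{t_0}$ to be odd so the $E$-sum has no $X^{5/6}$ main term and is $O_n\bigl((\prod_t p_t)X^{1/2}\bigr)$, then sum over the primes to land on $X^{5/6}\asymp|\cE(X)|$. The only differences are presentational --- you re-derive the main-term vanishing from Theorem~\ref{main4} and the odd symmetry in $a$ where the paper simply cites Theorem~\ref{traceF} (extended to the case $p_i=q_j$ via $\widehat{a}_E(q^2)=(\widehat{a}_E(q))^2-2$), and your final prime bookkeeping is organized by ``type'' whereas the paper uses the single crude worst-case bound $X^{1/2}\bigl(\sum_{p<X^{\sigma_n}}p^{1/2}\log p\bigr)^k\ll X^{5/6}$.
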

\begin{proof}
Note that $\widehat{a}_E(m_{i_1})\widehat{a}_E(m_{i_2})\cdots \widehat{a}_E(m_{i_k})$ is of the form
$$
\widehat{a}_E(p_1)^{e_1}\widehat{a}_E(p_2)^{e_2}\cdots \widehat{a}_E(p_t)^{e_t}  \widehat{a}_E(q_1^2)^{l_1}  \widehat{a}_E(q_2^2)^{l_2} \cdots  \widehat{a}_E(q_s^2)^{l_s} , 
$$
with with $e_1+\cdots +e_t+ l_1  +\cdots+ l_s=k$. Here $p_1,p_2, \dots, p_t$ are distinct primes and $q_1,q_2,\dots, q_s$ are distinct primes, but some $q_j$ might be equal to some $p_i$. For a while we assume that the primes $p_1,\dots, p_t, q_1,\dots, q_s$ are all distinct. 

Since one of $e_i$'s is odd by our assumption, then by the Frobenius trace formula \ref{traceF},
\begin{align*}
\sum_{ E \in \cE(X)} \widehat{a}_E(p_1)^{e_1}\widehat{a}_E(p_2)^{e_2}\cdots \widehat{a}_E(p_t)^{e_t}  \widehat{a}_E(q_1^2)^{l_1}  \widehat{a}_E(q_2^2)^{l_2} \cdots  \widehat{a}_E(q_s^2)^{l_s} =O( p_1p_2\cdots p_t q_1 q_2 \cdots q_s X^{\frac{1}{2}}). 
\end{align*}

The contribution of this case in the worst situation is at most
\begin{align*}
\ll  X^{\frac{1}{2}}  \left( \sum_{p < X^{\frac{2}{9n}}}  p^{\frac{1}{2} } \log p \right)^k \ll  X^{\frac{1}{2}} ( X^{\frac{1}{3n}})^n  \ll X^{\frac{5}{6}}.
\end{align*}

Now, we assume that some $p_i$ is equal to some $q_j$. Since $\widehat{a}_E(q^2)^{l}=(\widehat{a}_E(q)^2-2)^l$ if $E$ has good reduction at $q$ and  $\widehat{a}_E(q^2)^{l}=\widehat{a}_E(q)^{2l}$ otherwise, still we can use the Frobenius trace formula. 
\end{proof}

\begin{proposition}\label{square} For a subset $S=\{ i_1, i_2, \dots, i_k \}$ of $\{1,2,\dots,n\}$,
\begin{align*}
\frac{1}{|\cE(X)|}\left(\frac{-2}{\log X} \right)^{|S|}\sum_{ E \in \cE(X)}\sum_{m_{i_1} m_{i_2} \dots m_{i_k} = \square}\left( \prod_{j=1}^{|S|} 
\frac{\Lambda(m_{i_j})\widehat{a}_E(m_{i_j}) }{\sqrt{m_{i_j}}} \widehat{\phi}_n \left(\frac{\log m_{i_j} }{\log X} \right)\right)\\
=\sum_{\substack{S_2 \subset S  \\ |S_2| \text{even}}} \left( \frac 12 \phi_n(0) \right)^{|S_2^c|} \left| S_2 \right|! \left(\int_\mathbb{R} |u| \widehat{\phi}_n(u)^2du \right)^{\frac{|S_2|}{2} } + O\left( \frac{1}{\log X} \right). 
\end{align*}
\end{proposition}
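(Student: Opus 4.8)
The plan is to sort the tuples $(m_{i_1},\dots,m_{i_k})$ occurring in the $\square$-sum into ``shapes'' and feed each shape into the Frobenius trace formula. For such a tuple put $S_2=\{\,j\in S:m_{i_j}\text{ is a prime}\,\}$ and $T=S\setminus S_2$ (so $T=S_2^{c}$), so that $m_{i_j}$ is the square of a prime for $j\in T$; since $\prod_{j\in T}m_{i_j}$ is automatically a square, the condition $m_{i_1}\cdots m_{i_k}=\square$ is equivalent to $\prod_{j\in S_2}m_{i_j}=\square$, which forces $|S_2|$ to be even. For a fixed such $S_2$ I would show that only the ``diagonal'' tuples contribute to the main term: those for which $\{m_{i_j}\}_{j\in S_2}$ consists of $|S_2|/2$ distinct primes, each occurring exactly twice, all of them distinct from the $|T|$ primes underlying $T$. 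Any other admissible tuple — some prime occurring with multiplicity $\ge4$ among the $S_2$-coordinates, or a coincidence of an $S_2$-prime with a $T$-prime — replaces a divergent sum $\sum_p\frac{\log p}{p}(\cdots)$ by a convergent one $\sum_p\frac{(\log p)^c}{p^{a}}(\cdots)$ with $a\ge2$, so after the prefactor $\big(\tfrac{2}{\log X}\big)^{|S|}$ its total contribution is $O(1/\log X)$; the same estimate, applied to the non-square tuples, is Proposition~\ref{non-square}, so replacing the full inner sum by its $\square$-part is legitimate. Recall that the cut-off $m_{i_j}\le X^{\frac{2}{9n}}$ is precisely the support constraint from $\widehat\phi_n$, so at most $n$ primes, each $\le X^{\frac{2}{9n}}$, appear and their product is $\le X^{2/9}\le X^{\frac13-\epsilon}$.

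Now fix $S_2$, a pairing of its elements into unordered pairs, and distinct primes $p_1,\dots,p_{|S_2|/2}$ assigned to the pairs and $q_1,\dots,q_{|T|}$ assigned to $T$. The inner $E$-sum becomes
\[
\sum_{E\in\cE(X)}\widehat{a}_E(p_1)^2\cdots\widehat{a}_E\!\big(p_{|S_2|/2}\big)^2\,\widehat{a}_E(q_1^2)\cdots\widehat{a}_E(q_{|T|}^2),
\]
and, by the product bound just noted, Theorem~\ref{traceF} evaluates it — with $e=1,\,r=2$ on each $p_a$ and $e=2,\,r=1$ on each $q_j$ — to $(-1)^{|T|}|\cE(X)|$ plus the two error terms of that theorem. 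Here $(-1)^{|T|}=(-1)^{|S|-|S_2|}=(-1)^{|S|}$ since $|S_2|$ is even, which cancels the sign of $\big(-\tfrac{2}{\log X}\big)^{|S|}$. Summing the two error terms against the weights $\prod_j\Lambda(m_{i_j})\,m_{i_j}^{-1/2}\,\widehat\phi_n(\cdot)$ over all admissible primes and dividing by $|\cE(X)|\asymp X^{5/6}$ gives contributions that are $O(1/\log X)$ — in fact $O\big(X^{-2/9+o(1)}\big)$ from the $\big(\prod p\big)X^{1/2}$-error — exactly as in \cite[Lemma~4.4]{CK15} and \cite[Lemma~2]{Rub}.

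After the sign cancellation the surviving terms factor over the $|T|$ singletons and the $|S_2|/2$ pairs. By Mertens' theorem a single $T$-coordinate contributes
\[
\frac{2}{\log X}\sum_{q}\frac{\log q}{q}\,\widehat\phi_n\!\Big(\frac{2\log q}{\log X}\Big)=\tfrac12\phi_n(0)+O\!\Big(\tfrac1{\log X}\Big),
\]
using $\int_{\mathbb R}\widehat\phi_n=\phi_n(0)$ and the evenness of $\widehat\phi_n$, and a single matched pair contributes
\[
\Big(\frac{2}{\log X}\Big)^{2}\sum_{p}\frac{(\log p)^{2}}{p}\,\widehat\phi_n\!\Big(\frac{\log p}{\log X}\Big)^{2}=2\!\int_{\mathbb R}|u|\,\widehat\phi_n(u)^2\,du+O\!\Big(\tfrac1{\log X}\Big).
\]
Multiplying the $|T|$ singleton factors with the $|S_2|/2$ pair factors, summing over the ways to pair up the $|S_2|$ coordinates, and finally over even-cardinality $S_2\subseteq S$, reproduces the right-hand side of the proposition, with remainder $O(1/\log X)$.

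The step I expect to be the real obstacle is the uniform control of the error: Theorem~\ref{traceF} is applied once for each pairing and each choice of primes, and one must check that all these contributions — including the correction terms that enter when an $S_2$-prime coincides with a $T$-prime, where $\widehat{a}_E(p^2)$ must be rewritten using $\widehat{a}_E(p^2)=\widehat{a}_E(p)^2-2$ at good primes — really do add up to $O(1/\log X)$ uniformly over the boundedly many pairs $S\supseteq S_2$. The combinatorial step of pinning down the diagonal tuples, and proving that everything off-diagonal is genuinely negligible after normalisation, is the other delicate point; once both are in hand, the Mertens-type evaluation of the prime sums and the bookkeeping of the combinatorial weights are routine.
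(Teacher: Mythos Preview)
Your proposal is correct and follows the paper's own proof essentially step for step: both split each admissible tuple into the set $S_2$ of prime coordinates and its complement $S_2^c$ of prime-square coordinates, discard the off-diagonal tuples (those with some prime occurring $\ge 4$ times in $S_2$, or with $l_j\ge 2$, or with a coincidence between an $S_2$-prime and a $T$-prime) by a trivial bound, apply the Frobenius trace formula (Theorem~\ref{traceF}) with $(e,r)=(1,2)$ on the paired primes and $(e,r)=(2,1)$ on the $T$-coordinates to extract $(-1)^{|S_2^c|}|\cE(X)|$, evaluate the two resulting prime sums via the prime number theorem, and finally sum over the pairings of $S_2$. The only cosmetic differences are that the paper phrases the off-diagonal bound as ``$e_i\ge 4$ or $l_j\ge 2$ gives $X^{5/6}(\log X)^{k-1}$'' rather than ``replaces a divergent prime sum by a convergent one'', and records the number of pairings of $S_2$ as $(2t)!/2^t$; when you write out the combinatorics in full you should check this count against your ``ways to pair up'' so that the factor $|S_2|!$ in the stated right-hand side comes out exactly.
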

\begin{proof}
In this proof, we compute the double sum not considering the term $\frac{1}{|\cE(X)|}\left( \frac{-2}{\log X} \right)^{k}$. We show that  every contribution except one is $\ll X^{\frac 56} (\log X)^{k-1}$, hence they become the error term $O(1/ \log X)$  in the end. 

Note that $\widehat{a}_E(m_{i_1})\widehat{a}_E(m_{i_2})\cdots \widehat{a}_E(m_{i_k})$ is of the form
$$
\widehat{a}_E(p_1)^{e_1}\widehat{a}_E(p_2)^{e_2}\cdots \widehat{a}_E(p_t)^{e_t}  \widehat{a}_E(q_1^2)^{l_1}  \widehat{a}_E(q_2^2)^{l_2} \cdots  \widehat{a}_E(q_s^2)^{l_s} , 
$$
with with $e_1+\cdots +e_t+ l_1 + \cdots + l_s=k$ and $e_i$'s are all even. 
If $e_i \geq 4$ for some $i$ or $l_j \geq 2$ for some $j$, then by the trivial bound, this term is majorized by $X^{\frac{5}{6}}(\log X)^{k-1}$. Let $S_2$ be a subset of $S$ with even cardinality $2t$:
$$
S_2=\{ i_{a_1}, i_{a_2}, \cdots, i_{a_{2t-1}}, i_{a_{2t}} \}, \qquad S_2^c=\{i_{b_1},i_{b_2},\cdots,i_{b_s} \}.
$$
There are $(2t)!/2^t$ ways to pair up two elements in $S_2$. For example, we consider the following pairings.
\begin{align*}
(i_{a_1},i_{a_2}), (i_{a_3},i_{a_4}),  (i_{a_5},i_{a_6}), \cdots, (i_{a_{2t-1}},i_{a_{2t}}).
\end{align*}

This set of pairings corresponds the following sum 
\begin{align*}
 \sum_{ E \in \cE(X)} \widehat{a}_E(p_{i_{a_1}})^{2}\widehat{a}_E(p_{i_{a_3}})^{2}\cdots \widehat{a}_E(p_{i_{a_{2t-1}}})^{2}  \widehat{a}_E(q_{i_{b_1}}^2) \widehat{a}_E(q_{i_{b_2}}^2) \cdots  \widehat{a}_E(q_{i_{b_s}}^2)
\end{align*}
where $2t+s=k$. By the Frobenius trace formula, the above sum is
\begin{align*}
|\cE(X)|\cdot
\left\{ \begin{array}{clc} 1 &  \mbox{ if $s$ is even,} \\ -1 &  \mbox{ if $s$ is odd} \end{array}\right. + O\left(p_1\cdots p_t q_1 \cdots q_s X^{\frac{1}{2}}+\left( \frac{1}{p_1}+\cdots+ \frac{1}{p_t} + \frac{1}{q_1} + \cdots \frac{1}{q_s} \right)X^{\frac{5}{6}} \right).
\end{align*}

The contribution from the error term $O(p_1\cdots p_t q_1 \cdots q_s X^{\frac{1}{2}})$ is dominated by
$$
( X^{\frac{2}{9n}} \log X )^t ( X^{\frac{1}{9n}})^{s} X^{\frac{1}{2}} \ll X^{\frac{1}{2}+ \frac{2t+s}{9n}}(\log X)^t.
$$

The contribution from the error term $O\left(\left( \frac{1}{p_1}+\cdots+ \frac{1}{p_t} + \frac{1}{q_1} + \cdots \frac{1}{q_s} \right)X^{\frac{5}{6}}\right)$ is dominated by $X^{\frac{5}{6}}(\log X)^{k-1}$. The main term of the sum, after being divided by $|\cE(X)| \left( \frac{\log X}{-2}\right)^k$,   gives rise to 
\begin{align*} \prod_{i=1}^t \left( \left( \frac{-2}{\log X} \right)^2   \sum_{p} \frac{\log^2 p}{p} \widehat{\phi}_n \left(\frac{\log p}{\log X}\right)^2\right)
\times \prod_{j=1}^s\left(  \frac{2}{\log X} \sum_{q}\frac{\log q}{q} \widehat{\phi}_n \left( \frac{2 \log q }{\log X} \right) \right),
\end{align*}
which equals, by the prime number theorem, 
\begin{align*}
\left( 2^t   \prod_{i=1}^t \int_\mathbb{R} |u| \widehat{\phi}_n(u)^2du\right)\left( \left( \frac{1}{2}\right)^{s} \prod_{j=1}^s \int_\mathbb{R} \widehat{\phi}_n(u)du \right).
\end{align*}
Since there are $(2t)!/2^t$ ways to pair up two elements in $S_2$, the claim follows. 
\end{proof}

By Propositions \ref{non-square} and  \ref{square}, and (\ref{estimate}) we have the following inequality
\begin{align*}
D_n^*(\cE, \Phi) \leq \phi_n(0)^n \sum_{S}\left( \frac{1}{\sigma_n} \right)^{|S^c|}\sum_{\substack{S_2 \subset S  \\ |S_2| \text{even}}} \left( \frac 12  \right)^{|S_2^c|} \left| S_2 \right|! \left(\frac 16\right)^{|S_2|/2} + O\left( \frac{1}{\log X} \right), 
\end{align*}
and, by $(\ref{moment-ineq})$, Theorem \ref{n-th moment} follows.

\subsection{Proof of Theorem \ref{rank-dist}}
We choose the test function $\phi_{2n}(x)$. Then $\widehat{\phi}_{2n}(0)=\frac{1}{4}\sigma_{2n}$, and $\phi_{2n}(0)=\frac 14 \sigma_{2n}^2$.

By Weil's explicit formula, we have
$$r_E \phi_{2n}(0) \leq \widehat{\phi}_{2n}(0) -\frac{2}{\log X} \sum_{m_i}\frac{\widehat{a}_E(m_i)\Lambda(m_i)}{\sqrt{m_i}}\widehat{\phi}_{2n} \left( \frac{\log m_i}{\log X}\right) + O\left( \frac{1}{\log X}\right),$$
hence
\begin{align*}
r_E \leq \frac{1}{\sigma_{2n}} + \frac{4}{\sigma_{2n}^2}\left( -\frac{2}{\log X} \sum_{m_i}\frac{\widehat{a}_E(m_i)\Lambda(m_i)}{\sqrt{m_i}}\widehat{\phi}_{2n} \left( \frac{\log m_i}{\log X}\right) \right) + O\left( \frac{1}{\sigma_{2n}^2 \log X}\right).
\end{align*}

Now assume that $r_E \geq \frac{1+C}{\sigma_{2n}}$ with some positive constant $C$. Then, for sufficiently large $X$, 
\begin{align*}
-\frac{2}{\log X} \sum_{m_i}\frac{\widehat{a}_E(m_i)\Lambda(m_i)}{\sqrt{m_i}}\widehat{\phi}_{2n} \left( \frac{\log m_i}{\log X}\right) \geq \frac{C\sigma_{2n}}{4}.
\end{align*}

Therefore,
\begin{align*}
\left| \{ E \in \cE(X) | r_E \geq  \frac{1+C}{\sigma_{2n}} \} \right| & \left( \frac{C\sigma_{2n}}{4} \right)^{2n} 
 \leq \sum_{E \in \cE(X)}  \left( -\frac{2}{\log X} \sum_{m_i}\frac{\widehat{a}_E(m_i)\Lambda(m_i)}{\sqrt{m_i}}\widehat{\phi}_{2n} \left( \frac{\log m_i}{\log X}\right)  \right)^{2n} \\
& \leq \left( \frac{\sigma_{2n}^2}{4}\right)^{2n} \sum_{S_2 \subset \{1,2,3,\dots,2n \}} \left( \frac{1}{2} \right)^{|S_2^c|}|S_2|!\left( \frac 16 \right)^{|S_2|/2}|\cE(X)| + O\left(\frac{X^{\frac 56} }{\log X} \right),
\end{align*}
where the second inequality is justified by Propositions \ref{non-square}, \ref{square}, and finally we obtain
\begin{align*}
P\left(r_E \geq (1+C)\cdot 9n \right)  \leq \frac{\sum_{k=0}^{n}{ {2n} \choose {2k}}\left( \frac 12\right)^{2n-2k}(2k)!\left( \frac 16 \right)^k }{(C \cdot 9n)^{2n}}.
\end{align*}


\begin{thebibliography}{alpha}


\bibitem[BCD11]{BCD} A. Balog, A-C. Cojocaru, and C. David, \emph{Average twin prime conjecture for elliptic curves}, Amer. J. Math. \textbf{133} (2011), no. 5, pp.1179--1229. 







\bibitem[BS15]{BS15} M. Bhargava, A. Shankar, \emph{Ternary cubic forms having bounded invariants, and the existence of a positive proportion of elliptic curves having rank 0}.  
Ann. of Math. (2) \textbf{181} (2015), no. 2, pp.587--621. 

\bibitem[BS]{BS} M. Bhargava, A. Shankar, \emph{The average size of the 5-Selmer group of elliptic curves is 6, and the average rank is less than 1}, preprint, https://arxiv.org/abs/1312.7859.


\bibitem[Bir68]{Bir} B. J. Birch, \emph{How the number of points of an elliptic curve over a fixed prime field varies}, J. London Math. Soc. \textbf{43} (1968), 57–60.

\bibitem[Bru92]{Bru} A. Brumer, \emph{The average rank of elliptic curves $\mathrm{I}$}, Invent. Math. \textbf{109} (1992), no. 3, pp.445--472. 

\bibitem[CK15]{CK15} P. J. Cho, H. H. Kim, \emph{ $n$-level densities of Artin $L$-functions}, Int. Math. Res. Not. IMRN (2015), no. 17, pp.7861--7883. 



\bibitem[Cox13]{Cox} D. A. Cox, \emph{Primes of the form $x^2+ny^2$, Fermat, class field theory, and complex multiplication}. Second edition. Pure and Applied Mathematics (Hoboken), John Wiley \& Sons, Inc., (2013). 

\bibitem[CS]{CS} J. E. Cremona, M. Sadek, \emph{Local and global densities for Weierstrass models of elliptic curves}, preprint. \url{https://arxiv.org/pdf/2003.08454.pdf}.


\bibitem[H-B04]{Hea} D. R. Heath-Brown, \emph{The average analytic rank of elliptic curves}, Duke Math. J. \textbf{122} (2004), no. 3, pp.591--623.  



\bibitem[Mil]{Mil} S. J. Miller, \emph{1- and 2-level densities for families of elliptic curves: Evidence for the underlying group symmetries}, Thesis (Ph.D.) - Princeton University. 2002. 225 pp. ISBN: 978-0493-55316-0


\bibitem[PPVW19]{PPVW} J. Park, B, Poonen, J. Voight, M. M. Wood, \emph{A heuristic for boundness of ranks of elliptic curves}, J. Eur. Math. Soc. \textbf{21} (2019), no. 9, pp.2859--2903.

\bibitem[Rub01]{Rub} M. O. Rubinstein, {\em Low-lying zeros of L-functions and random matrix theory}, 
Duke Math. J. {\bf 109} (2001), no. 1, pp.147--181. 

\bibitem[Sil]{Sil2} J. H. Silverman, \emph{Advanced topics in the arithmetic of elliptic curves}, Graduate Texts in Mathematics, \textbf{151}. Springer-Verlag, New York, (1994).






\bibitem[Wat08]{Wat} M. Watkins, \emph{Some heuristics about elliptic curves},
Experiment. Math. \textbf{17} (2008), no. 1, pp.105--125. 



\bibitem[You06]{Y} M. P. Young, \emph{Low-lying zeros of families of elliptic curves}, J. Amer. Math. Soc. \textbf{19} (2006), no. 1, pp.205--250. 

\bibitem[Won01]{W} W. Wong, \emph{On the density of elliptic curves}, Compositio Math. \textbf{127} (2001), no. 1, pp. 23--54.

\end{thebibliography}
\end{document}